\documentclass[letterpaper,11pt]{article}
\usepackage{amsmath}
\usepackage{amssymb}
\usepackage{color}
\usepackage{mathtools}
\usepackage{amsthm, amscd, mathrsfs}
\usepackage[top=1in, bottom=1.7in, left=1.2in, right=1.2in]{geometry}

\bibliographystyle{abbrv}
\newtheorem{thm}{Theorem}[section]
\newtheorem{prop}[thm]{Proposition}
\newtheorem{lem}[thm]{Lemma}

\theoremstyle{remark}

\newtheorem{example}[thm]{\bf Example}

\newtheorem{defn}[thm]{\bf Definition}

\newcommand{\mbb}{\mathbb}

\newcommand{\N}{\mathbb{N}}
\newcommand{\Z}{\mbb{Z}}
\newcommand{\Q}{\mbb{Q}}

\newcommand{\eps}{\varepsilon}

\newcommand{\Set}[2]{\left\{\, #1 \;\middle|\; #2 \,\right\}}
\newcommand{\oneset}[1]{\left\{\, #1 \,\right\}}

\newcommand{\floor}[1]{\left\lfloor\mathinner{#1} \right\rfloor}

\newcommand{\gen}[1]{\left< \mathinner{#1} \right>}
\newcommand{\genr}[2]{\left< \, \mathinner{#1}\; \middle|\;\mathinner{#2} \, \right>}
\newcommand{\abs}[1]{\left|\mathinner{#1}\right|}

\newcommand{\smalloverline}[1]
{{\mspace{.8mu}\overline{\mspace{-.8mu}#1\mspace{-.8mu}}\mspace{.8mu}}}
\newcommand{\ov}[1]{\smalloverline{#1}}

\newcommand{\rank}{\mathrm{rk}}
\newcommand{\Oh}{\mathcal{O}}

\newcommand{\UTZ}[1]{UT_{#1}(\Z)}

\setlength{\parindent}{0pt}
\setlength{\parskip}{1ex plus 0.5ex minus 0.2ex}
\usepackage{fancyhdr}

%
 \newcounter{AlgorithmusCounter}[section]

\renewcommand{\theAlgorithmusCounter}{\arabic{section}.\arabic{AlgorithmusCounter}}
\newenvironment{algorithm}[1]{%
  \refstepcounter{AlgorithmusCounter}%
  \begin{tabbing}
    \hspace{8.5cm}\=\kill
    \tab\rule{\textwidth}{0.8pt} \\
    \parbox{.99\textwidth}{\textbf{Algorithm \theAlgorithmusCounter\ }
    \emph{#1}} \\[-2.5mm]
    \tab\rule{\textwidth}{0.8pt} 
}{
    \\[-3mm]
    \tab\rule{\textwidth}{0.8pt}
  \end{tabbing}
}


\newcommand{\comment}[1]{ \> \tab $(*$ {\footnotesize #1} $*)$}  

%
%
%



\newcommand{\coprocedure}{\textbf{procedure }}

\newcommand{\cobegin}{\textbf{begin }}
\newcommand{\coend}{\textbf{end }}

\newcommand{\cofor}{\textbf{for }}

\newcommand{\coforeach}{\textbf{for each }}
\newcommand{\codo}{\textbf{do }}

\newcommand{\coendfor}{\textbf{endfor }}
\newcommand{\coreturn}{\textbf{return }}

\newcommand{\codownto}{\textbf{downto }}

\newcommand{\corepeat}{\textbf{repeat }}
\newcommand{\countil}{\textbf{until }}

\newcommand{\IndentLength}{\hspace*{1.5em}}
\newcommand{\tab}{}
\newcommand{\tabb}{\IndentLength}
\newcommand{\tabbb}{\tabb\IndentLength}
\newcommand{\tabbbb}{\tabbb\IndentLength}
\newcommand{\tabbbbb}{\tabbbb\IndentLength}

\begin{document}

\title{On the dimension of matrix embeddings of torsion-free nilpotent groups}
\author{Funda Gul, Armin Wei\ss}
\date{\today}
\maketitle
\begin{abstract}
    Since the work of Jennings (1955), it is well-known that any finitely generated torsion-free nilpotent group can be embedded into unitriangular integer matrices $\UTZ{N}$ for some $N$. In 2006, Nickel proposed an algorithm to calculate such embeddings.
	In this work, we show that if $\UTZ{n}$ is embedded into $\UTZ{N}$ using Nickel's algorithm, then $N\geq 2^{n/2 -2}$ if the standard ordering of the Mal'cev basis (as in Nickel's original paper) is used. In particular, we establish an exponential worst-case running time of Nickel's algorithm.  
	
    On the other hand, we also prove a general exponential upper bound on the dimension of the embedding by showing that for any torsion free, finitely generated nilpotent group the matrix representation produced by Nickel's algorithm has never larger dimension than Jennings' embedding. Moreover, when starting with a special Mal'cev basis, Nickel's embedding for $\UTZ{n}$ has only quadratic size.   
	Finally, we consider some special cases like free nilpotent groups and Heisenberg groups and compare the sizes of the embeddings.
\smallskip

\noindent
\textbf{Keywords.} nilpotent groups, Nickel's algorithm, Jennings' embedding, Mal'cev basis, unitriangular matrix group. 
\end{abstract}

	\section{Introduction}
    A classical result due to Jennings \cite{Jennings55} shows that every finitely generated torsion-free nilpotent group ($\tau$-group) can be embedded into some group of unitriangular matrices over the integers. Embeddings into matrix groups are desirable for various reasons: they allow to apply the powerful tool of linear algebra to prove new results about the groups; moreover, many computations can be performed efficiently with matrices~-- in particular, the word problem for linear groups can be solved in logarithmic space \cite{lz77}. Embeddings of nilpotent groups are the basic building blocks for embeddings of polycyclic groups (see e.\,g.\ \cite{LoO99}), which are of particular interest because of their possible application in non-commutative cryptography \cite{EickK04}. For instance, in \cite{MyasnikovR15}, matrix embeddings were used to break such a cryptosystem based on the conjugacy problem in a certain class of polycyclic groups.
    
    Since Jennings' embedding (1955), several other descriptions of such embeddings \cite{Hall57, KargapolovM79} have been given and also algorithms \cite{LoO99, GraafN02, Nickel06} for computing such embeddings from a given Mal'cev presentation. 
    The presumably most efficient of these algorithms is due to Nickel \cite{Nickel06}: it uses the multiplication polynomials associated to the Mal'cev basis in order to compute a $G$-submodule of the dual space of the group algebra $\Q G$~-- an approach similar to the description of the embedding in \cite[Section 17.2]{KargapolovM79}. The multiplication polynomials were first described by Hall~\cite{Hall57}; they can be computed with the \emph{Deep Thought} algorithm~\cite{LeedhamGreenS98}. 
    
    Up to now there are no bounds known neither on the running time nor on the dimension of the embedding obtained by Nickel's algorithm. 
    In \cite{HabeebK13}, a polynomial bound for both is claimed; however, there is a gap in the proof. Indeed, here we prove these results to be wrong: our main result (Theorem~\ref{thm:complexity}) establishes the lower bound of $N\geq 2^{n/2 -2}$ for the embedding of $\UTZ{n}$ (with the standard Mal'cev basis as in Nickel's paper) into $\UTZ{N}$ computed by Nickel's algorithm. In particular, we establish an exponential blow-up for the dimension, which also implies an exponential running time since the output has to be written down. 
	On the other hand, we show the upper bound $N \leq 3^n$. Moreover, by reordering the Mal'cev basis of $\UTZ{n}$, Nickel's algorithm produces an embedding of size only $\Oh(n^2)$. Our exponential lower bounds also imply that for breaking cryptographic systems based on polycyclic groups the usage of matrix embeddings (at least with the known algorithms to compute them) might not be feasible if the platform group is properly chosen (with nilpotent subgroups of high class).
	
	 We also prove a general upper bound on Nickel's embedding and show that for any torsion-free nilpotent group the dimension of Nickel's embedding is never larger than the dimension obtained by Jennings' embedding. In order to do so, we derive a general bound on the degree (or more precisely, weight) of the multiplication polynomials, which follows by a length argument on the words introduced during the collection process.
	
	Moreover, in Section~\ref{NickJen} we consider other special classes of groups and compare Nickel's and Jennings' embedding: in free nilpotent groups both embeddings have approximately the same dimension. In contrast, for generalized Heisenberg groups, Nickel's algorithm yields an embedding of linear size whereas Jennings' embedding is quadratic.
	Also, for direct products Nickel's algorithm behaves well~-- Jennings' embedding again might lead to a large blow-up.
	Thus, although Nickel's embedding does not allow any good upper bounds either, in many situations it is much superior to Jennings' embedding in terms of the dimension of the matrix representation.
	
	Finally, in the last section we discuss some open problems related to the running time of Nickel's algorithm as well as the size of matrix representations of $\tau$-groups. 
	We start by giving some basic definitions and fix our notation.
	
	\section{Preliminaries}
	By a \emph{$\tau$-group} we mean a finitely generated torsion-free nilpotent group. $\UTZ{n}$ denotes the group of unitriangular (upper triangular and all diagonal entries equal to one) $n\times n$ matrices over the integers.

	For $a,b \in G$, the \emph{commutator} of $a$ and $b$ is defined as $[a,b] = a^{-1} b^{-1} ab$. Thus, in general, we have the relation $ab = ba [a,b]$.
	
	Let $G$ be a $\tau$-group. A \emph{Mal'cev basis} for $G$ is a tuple $(a_1, \dots, a_n)$ with $a_i \in G$ such that each $g \in G$ can be written uniquely as a normal form $g=a_1^{x_1} \cdots a_n^{x_n}$ with integers $x_1, \dots, x_n$ and such that 
	$$G = G_1 > G_2 > \cdots > G_n > 1$$
	is a central series where $G_i = \gen{a_{i}, \dots, a_n}$. Given a Mal'cev basis $(a_1, \dots, a_n)$ and constants $c_{i,j,k} \in \Z$ for $1\leq i<j<k \leq n$, a nilpotent group $G$ is uniquely defined by the relations $$[a_i, a_j]  = a_{j+1}^{c_{i,j,j+1}} \cdots a_n^{c_{i,j,n}}.$$

	The \emph{collection process} on words (sequences of letters) $w \in \oneset{a_1^{\pm1}, \dots, a_n^{\pm1}}^*$ over the generators is the  successive application of the rewriting rules
	\begin{align}
		&&&&a_j^{\eps_j} a_i^{\eps_i} &\to a_i^{\eps_i} a_j^{\eps_j} \cdot a_{j+1}^{c_{i,j,j+1}^{(\eps_i,\eps_j)}} \cdots a_{n}^{c_{i,j,n}^{(\eps_i,\eps_j)}} & \text{for $i<j$ and $\eps_i, \eps_j \in \oneset{\pm 1}$}\label{eq:collection}
	\end{align}
	where the numbers $c_{i,j,k}^{(\eps_i,\eps_j)} \in \Z$ are defined by 
	$[a_j^{\eps_j}, a_i^{\eps_i}] = c_{i,j,n}^{(\eps_i,\eps_j)}$,
	i.\,e.\ the commutators are written with respect to the Mal'cev basis.  Here, $v \to w$ for two words $v$ and $w$ means that any word of the form $u_1vu_2$ for words $u_1, u_2$ can be \emph{rewritten} in one step to $u_1wu_2$.
	An arbitrary word over the generators can be written in terms of the Mal'cev basis by using the collection process, i.\,e.\ by successively applying the rules (\ref{eq:collection}).
	
	Let $\oneset{x_1, \dots, x_n}$ be a set of \emph{variables}. A \emph{monomial} is a product of the form $ \omega = \prod_{i=1}^k x_i^{e_i}$ where $e_i\in \N$ for $i = 1, \dots, k$. Its degree is $d = \sum_{i=1}^ke_i$. If $e_i \neq 0$, we say $x_i$ \emph{appears} in the monomial $\omega$ or $\omega$ \emph{contains} $x_i$.
	A \emph{polynomial} $q = \sum_{j=1}^{m} a_j \omega_j \in \Z[x_1, \dots, x_n]$ is a sum of monomials $\omega_j$ with coefficients $a_j \in \Z$~-- we require the coefficients $a_j$ to be non-zero. Likewise, some variable $x_i$ \emph{appears} (resp.\ \emph{is contained}) in $q$ if it appears in some monomial $w_j$ with non-zero coefficient of $q$.

\subsection{Nickel's Embedding}

	Let $G$ be $\tau$-group and consider a Mal'cev basis $(a_1, \dots, a_n)$ for $G$.
	Since each $g \in G$ can be written uniquely as $g=a_1^{x_1} \cdots a_n^{x_n}$ with integers $x_1, \dots, x_n$, in particular, the product of two elements can be written in the same fashion 
	$$ a_1^{x_1}\cdots a_n^{x_n} \cdot a_1^{y_1} \cdots a_n^{y_n}= a_1^{q_1} \cdots a_n^{q_n},$$
	where the exponents $q_1,\dots, q_n$ are functions of the variables $x_1, \dots ,x_n $ and $y_1, \dots,y_n$. Hall~\cite{Hall57} showed that these functions are polynomials. We call $q_1, \dots, q_n\in \Z[x_1, \dots, x_n,y_1,\dots,y_n]$ the multiplication polynomials for the Mal'cev basis $(a_1, \dots, a_n)$. 
	Leedham-Green and Soicher \cite{LeedhamGreenS98} designed the so-called \emph{Deep Thought Algorithm} to compute the multiplication polynomials from a given Mal'cev basis $(a_1,\dots, a_n)$  and constants $c_{i,j,k} \in \Z$ for $i<j<k$ (representing the relations $[a_i, a_j]  = a_{j+1}^{c_{i,j,j+1}} \cdots a_n^{c_{i,j,n}}$). The algorithm presented in Nickel's paper takes as input a Mal'cev basis together with the corresponding multiplication polynomials (computed by the algorithm \cite{LeedhamGreenS98}) and computes a unitriangular matrix representation for $G$ over the integers.
	
	To construct the representation Nickel uses the fact that the dual space
	$$ (\Q G)^* =\{ f: \Q G \to \Q \: | \:f  \text{ is linear} \} $$
	is a G-module, where $G$ acts on $(\Q G)^*$ as follows: for $g\in G$ and $f\in (\Q G)^*$ let $f^g$ be the function defined by $h \mapsto f(h\cdot g^{-1})$ for $h\in G $. 
	
    Identifying $a_1^{x_1}\cdots a_n^{x_n}$ with $x_1,\dots ,x_n$ (this is well-defined because every group element can be uniquely written with respect to the Mal'cev basis) and writing $f(x_1,\dots, x_n)$ instead of $f(a_1^{x_1}\dots a_n^{x_n})$ allows us to view $\Q [ x_1,\dots, x_n]$ as a subset of of $(\Q G)^*$. The image of $f\in (\Q G)^*$ under the action of $g^{-1}=a_1^{y_1}\dots a_n^{y_n}$ can be described with the help of the polynomials $q_1,\dots, q_n$: for $h=a_1^{x_1}\dots a_n^{x_n}$ we have $hg^{-1}= a_1^{q_1}\dots a_n^{q_n}$. Therefore, applying $g^{-1} \in G$ to a function $f$ amounts to substituting the multiplication polynomials into $f$. If $f$ is itself a polynomial, then $f(q_1, \dots, q_n)$ is a polynomial in the variables $x_1, \dots , x_n$ and $y_1, \dots, y_n$.
	For the proof of the following lemma, we refer to~\cite{Nickel06}.
\begin{lem}[\cite{Nickel06}]\label{le:fdim}
	Let $f \in \Q[x_1,\dots,x_n]$, then the $G$-submodule of $(\Q G)^*$ generated by $f$ is finite-dimensional as a $\Q$-vector space.
\end{lem}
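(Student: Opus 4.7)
The plan is to express the action of each $g \in G$ on $f$ as a specialization of the $x$-coefficients of a single fixed bivariate polynomial, and to deduce that the whole orbit of $f$ lies inside a finite-dimensional subspace of $\Q[x_1,\dots,x_n]$.

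First, I would use the description of the action recalled just before the lemma: for $g \in G$ with normal form $g^{-1} = a_1^{y_1} \cdots a_n^{y_n}$, the element $f^g$, viewed as a polynomial in $x_1,\dots,x_n$ via the identification from the setup, is given by $f^g(x_1,\dots,x_n) = f(q_1, \dots, q_n)$, where $q_i = q_i(x_1,\dots,x_n,y_1,\dots,y_n)$ are Hall's multiplication polynomials. Since $f$ is a polynomial and each $q_i$ is a polynomial (by Hall's theorem), the composition
$$F(x,y) := f\bigl(q_1(x,y), \dots, q_n(x,y)\bigr) \in \Q[x_1,\dots,x_n,y_1,\dots,y_n]$$
is a genuine polynomial, hence a finite $\Q$-linear combination of monomials.

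Next, I would group the monomials of $F$ according to their $y$-part, writing
$$F(x,y) = \sum_{\alpha \in A} p_\alpha(x_1,\dots,x_n)\, y_1^{\alpha_1} \cdots y_n^{\alpha_n}$$
for a finite index set $A \subset \N^n$ and polynomials $p_\alpha \in \Q[x_1,\dots,x_n]$. Specialising $(y_1,\dots,y_n)$ to the Mal'cev coordinates of $g^{-1}$ shows that $f^g$ is a $\Q$-linear combination of the \emph{fixed} finite family $\{p_\alpha : \alpha \in A\}$, with only the scalar coefficients depending on $g$.

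Finally, the $G$-submodule of $(\Q G)^*$ generated by $f$ coincides with $\mathrm{span}_\Q \{f^g : g \in G\}$: this span is already $G$-stable, since $(f^g)^{g'} = f^{gg'}$ is again an orbit element. As the whole orbit is contained in $\mathrm{span}_\Q \{p_\alpha : \alpha \in A\}$, the submodule is finite-dimensional over $\Q$, with dimension at most $\abs{A}$. I do not expect a substantial obstacle here: the argument rests entirely on Hall's theorem that the $q_i$ are polynomials, after which separation of the $x$- and $y$-variables yields the bound on the dimension for free.
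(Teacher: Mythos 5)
The paper itself gives no proof of this lemma, deferring to Nickel's original article; your argument is the standard one suggested by the surrounding text and matches what Nickel does. The two key points — that $F(x,y)=f(q_1,\dots,q_n)$ is a genuine polynomial by Hall's theorem, hence a finite $\Q$-combination of products $p_\alpha(x)\,y^\alpha$, and that $\mathrm{span}_\Q\{f^g : g\in G\}$ is already $G$-stable because $(f^g)^{g'}=f^{gg'}$ — are both correctly handled, so the proposal is complete and correct.
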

	The next lemma shows how to construct a finite dimensional faithful $G$-module of $(\Q G)^*$. 
	We consider the coordinate functions $t_i: G \to \Z$ for $i = 1, \dots, n$ which map $a_1^{x_1}\cdots a_n^{x_n}$ to $x_i$. Note that $t_i$ is well-defined because each element of $G$ can be written uniquely in the form $a_1^{x_1}\cdots a_n^{x_n}$. The values $t_i^g(1)$ for $i=1, \dots, n$ determine the group element $g$ uniquely. Thus, we obtain as a consequence of Lemma \ref{le:fdim}:
\begin{lem}[\cite{Nickel06}]\label{le:faithful module}
	The module $M$ generated by $t_1,\dots, t_n$ (as a submodule of $(\Q G)^*$) is a finite dimensional faithful $G$-module.
\end{lem}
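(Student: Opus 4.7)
The proof splits into the two assertions packaged in the lemma: finite dimensionality and faithfulness.

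For finite dimensionality, the plan is to invoke Lemma~\ref{le:fdim} directly. Each coordinate function $t_i$ lies in $\Q[x_1,\dots,x_n] \subseteq (\Q G)^*$, so the $G$-submodule $M_i = \gen{t_i}_G$ generated by $t_i$ is finite-dimensional as a $\Q$-vector space by Lemma~\ref{le:fdim}. Since $M = M_1 + \cdots + M_n$ is the sum of finitely many finite-dimensional subspaces, it is itself finite-dimensional, with $\dim_\Q M \leq \sum_{i=1}^n \dim_\Q M_i$.

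For faithfulness, the plan is to use the hint noted just before the lemma: the values $t_i^g(1)$ for $i=1,\dots,n$ recover $g$ uniquely. Concretely, suppose $g \in G$ acts trivially on $M$, so in particular $t_i^g = t_i$ for every $i$. Evaluating at $1 \in G$ and unpacking the action yields
\[
t_i(g^{-1}) \;=\; t_i(1 \cdot g^{-1}) \;=\; t_i^g(1) \;=\; t_i(1) \;=\; 0,
\]
using that $1 = a_1^0 \cdots a_n^0$ has all coordinates zero. Writing $g^{-1} = a_1^{y_1} \cdots a_n^{y_n}$ in normal form, we get $y_i = t_i(g^{-1}) = 0$ for every $i$, hence $g^{-1} = 1$ and therefore $g = 1$. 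Thus the kernel of the action $G \to \Aut_\Q(M)$ is trivial.

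There is no real obstacle here: once Lemma~\ref{le:fdim} is in hand, finite dimensionality is automatic from closure of finite-dimensionality under finite sums, and faithfulness reduces to the observation that the $n$-tuple $(t_1^g(1), \dots, t_n^g(1))$ literally encodes the Mal'cev coordinates of $g^{-1}$, so triviality of the action forces $g = 1$. The only point that needs to be stated carefully is the unwrapping of the action $f^g(h) = f(hg^{-1})$ at $h = 1$, which is what translates \emph{trivial action on $t_i$} into \emph{vanishing of all Mal'cev coordinates of $g^{-1}$}.
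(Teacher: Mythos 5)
Your proof is correct and follows essentially the same route the paper sketches (and attributes to \cite{Nickel06}): finite dimensionality from Lemma~\ref{le:fdim} applied to each $t_i$ together with closure under finite sums, and faithfulness from the observation that $t_i^g(1)=t_i(g^{-1})$ recovers the Mal'cev coordinates of $g^{-1}$. You have simply filled in the short computation that the paper leaves implicit.
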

	As a result of the Lemma \ref{le:faithful module}, $G$ has a matrix representation for some $n\in\N$. By choosing the order of the basis elements properly, the matrices are of unitriangular form. In the following we call this \emph{Nickel's embedding} of $G$ into $\UTZ{N}$. As part of the algorithm, Nickel also explains how the unitriangular presentation of $G$ is obtained. Next we provide a brief explanation of the algorithm, for further details, we refer to~\cite{Nickel06}.
	
\subsection{Nickel's algorithm}
	Nickel's algorithm computes the $G$-module $M$ of Lemma \ref{le:faithful module}~-- it also may be used to compute the $G$-module generated by some arbitrary list $f_1,...,f_k$ of polynomials. 
	Note that the span of a list of polynomials is a $G$-module if the image of each polynomial in the
	list under each generator of $G$ is contained in the span. Therefore, for all $j$, we consider the restricted multiplication polynomials
	$q_1^{(j)},\dots,q_n^{(j)}$ that describe the exponents of the product of a general element of the group
	and $a_j^{-1}$ : 
	$$a_1^{x_1} \cdots a_n^{x_n}·a_j^{-1}=a_1^{q_1^{(j)}} \cdots a_n^{q_n^{(j)}}$$
	
	Notice that $q_i^{(j)}=x_i$ for $i < j$ and $q_j^{j}= x_j-1$. The polynomials $q_1^{(j)}, \dots ,q_n^{(j)}$ can be obtained from the polynomials $q_1,\dots,q_n$ by setting $y_i = 0$ for $i \neq j$ and $y_j = -1$.
	This can be denoted by the shorthand notation $y_i = -\delta_{i,j}$. The image of a function $f \in {\Q G}^*$ under the action of $a_j$ is $f(q_1^{(j)}, \dots, q_n^{(j)})$.

	The algorithm starts with the coordinate functions $t_1, \dots, t_n$ (or some other list of polynomials $f_1,...,f_k$) and successively acts with $a_1$ on the basis polynomials of the previous step until the resulting polynomial lies in the span of the previous basis polynomials. Then, all basis elements are acted on with $a_2$ and so on. For a precise description as pseudocode, see Algorithm~\ref{alg:Nickel}.
	\begin{figure}[hbt]
		\begin{algorithm}{Nickel's Algorithm}\label{alg:Nickel}
			\\ \tab \coprocedure MatrixRepresentation(Mal'cev basis $a_1, \dots, a_n$,\\ multiplication polynomials $q_1, \dots, q_n$, initial polynomials $f_1, \dots, f_k$)
			\\ \tab \cobegin
			\\ \tabb Insert($B, \{f_1, \dots, f_k\}$)
			\\ \tabb \cofor $j \gets n$ \codownto $1$ \codo
			\\ \tabbb $\vec q^{(j)} = [q_1(x_1,\dots x_k, y_i=-\delta_{i,j}),\dots, q_n(x_1,\dots x_k, y_i=-\delta_{i,j})]$
			\\ \tabbb \coforeach $f\in \mathrm{Copy}(B)$ \codo
			\\ \tabbbb \corepeat
			\\ \tabbbbb $f^{a_j}\gets f(q^{(j)}_1, \dots, q^{(j)}_n)$
			\\ \tabbbbb $r \gets$ Insert($B, f^{a_j}$)
			\\ \tabbbbb $f \gets f^{a_j}$
			\\ \tabbbb \countil $r=0$ \comment{until $f^{a_j} \in \mathrm{span}(B)$}
			\\ \tabbb \coendfor
			\\ \tabb \coendfor
			\\ \tabb \coreturn $B$
			\\ \tab \coend
		\end{algorithm}
	\end{figure}
	
	The algorithm uses the \textit{Insert} routine, which adds a given polynomial to a basis of polynomials if that polynomial is not a linear combination of the basis elements. For this we fix an arbitrary ordering on the monomials. The leading monomial of a polynomial is the largest monomial with respect to that order. \textit{Insert} takes as arguments a basis of polynomials ordered with increasing leading monomials and a polynomial $f$. It subtracts from $f$ a suitable $\Q$-multiple of the polynomial in the basis with the largest leading monomial such that this monomial does not occur in the result. This operation is iterated with all basis elements in decreasing order of their leading monomials. If the final polynomial is different from the zero polynomial, it is inserted into the basis at the appropriate place and returned by the procedure.

	\section{Embedding unitriangular matrices}\label{Nickel'sbounds}
	
	Our aim of this section is to derive bounds on the dimension of the embedding which is produced by  Nickel's algorithm when embedding of unitriangular matrices into unitriangular matrices. We will both prove an exponential lower and  bound. First we will clarify our notation and collect some general facts about unitriangular matrices and the corresponding embedding.
	Let $G = \UTZ{m}$ be the unitriangular group of $m \times m$ matrices. The group $G$ has nilpotency class $c=m-1$ and Hirsch length $n= \frac{m(m-1)}{2}$.
	
	For $i<j$ let $e_{i,j}(\alpha)$ be the matrix with $ij$-th entry $\alpha$ and the rest of the entries $0$. We define $s_{i,j}(\alpha)=1+e_{i,j}(\alpha)$ and $s_{i,j}=s_{i,j}(1)$. 
	We have
	\begin{align*}
s_{i,j}^{-1}&= s_{i,j}(-1),\\
[s_{i,j}, s_{j,k}]&= \mathrlap{s_{i,k},}\hphantom{s_{i,k}(-1) } \qquad \text{and}\\ [s_{ji}, s_{k,j}]&=s_{i,k}(-1) \qquad \text{for} \: i<j<k.
	\end{align*}
	Thus, we obtain the general commutation rules
	\begin{align}
		s_{i,j}^x \;\! s_{k,\ell}^y = \begin{cases}
			s_{k,\ell}^y \;\! s_{i,j}^x & \;\text{if } i\neq \ell \text{ and } j \neq k,\\
			s_{k,\ell}^y \;\! s_{i,j}^x \;s_{i,\ell}^{xy} & \;\text{if } j = k, \\
			s_{k,\ell}^y \;\! s_{i,j}^x \;s_{k,j}^{-xy} & \;\text{if } i = \ell,
		\end{cases}\label{eq:commutators}
	\end{align}
	for arbitrary $x,y \in \Z$.
	Thus, for unitriangular matrices we have a \emph{generalized collection process} which not only operates letter by letter but which replaces factors $s_{i,j}^x \;\! s_{k,\ell}^y$ by the respective right side of (\ref{eq:commutators})~-- given that $s_{i,j}$ comes on the right of $s_{k,\ell}$ in the Mal'cev basis. Here, $x$ and $y$ even might not be only integers, but arbitrary polynomials with integer coefficients.

\begin{example}
    Let $H=\langle a_1,a_2,a_3 \mid [a_1,a_3]=[a_2,a_3] =1,[a_1,a_2]=a_3\rangle = UT_3(\Z)$ where
    
    $$a_1=\left[\begin{array}{ccc} 1&1&0 \\ 0&1&0 \\ 0&0&1  \end{array}\right], \qquad 
    a_2=\left[\begin{array}{ccc} 1&0&0 \\ 0&1&1 \\ 0&0&1  \end{array}\right], \qquad
    a_3=\left[\begin{array}{ccc} 1&0&1 \\ 0&1&0 \\ 0&0&1 \end{array}\right].$$
    
    Note that $H$ is also known as th \emph{Heisenberg group} and that $( a_1,a_2,a_3)$ is a Mal'cev basis. We will show that under Nickel's embedding $H$ is embedded into $UT_4(\Z)$. For doing that, we need to find a $\Q$-basis for the $H$-submodule of $(\Q H)^*$ generated by $\{t_1,t_2,t_3\}$ where $t_i(a_1^{x_1}a_2^{x_2}a_3^{x_3}) = x_i$.  
    In order to do so, we need to take a look at the action of powers of $\{a_1,a_2,a_3\}$ over these coordinate functions.
    We have 
    \begin{align*} t_i^{a_1^k}(a_1^{x_1}a_2^{x_2}a_3^{x_3}) & =  t_i(a_1^{x_1}a_2^{x_2}a_3^{x_3}\cdot a_1^{-k}) \\ &=   t_i(a_1^{x_1}a_2^{x_2}\cdot a_1^{-k} \cdot a_3^{x_3}) \\ & = t_i(a_1^{x_1} \cdot a_1^{-k} \cdot a_2^{x_2} \cdot a_3^{kx_2} \cdot a_3^{x_3}) \\ 
    &=t_i(a_1^{x_1-k}a_2^{x_2}a_3^{x_3+kx_2})\\ &=
    \begin{cases}
    \mathrlap{x_1-k}\hphantom{x_3+kx_2}  \;\;= t_1(a_1^{x_1}a_2^{x_2}a_3^{x_3}) - k,     &\qquad i=1,\\
    \mathrlap{x_2}\hphantom{x_3+kx_2} \;\;= t_2(a_1^{x_1}a_2^{x_2}a_3^{x_3}),          &\qquad i=2,\\
    x_3+kx_2 \;\;= t_3(a_1^{x_1}a_2^{x_2}a_3^{x_3}) + k t_2(a_1^{x_1}a_2^{x_2}a_3^{x_3}),     &\qquad i=3.\\
    \end{cases}
    \end{align*}
    Similarly,
    \begin{align*} 
    t_1^{a_2^k}(a_1^{x_1}a_2^{x_2}a_3^{x_3})&=x_1=t_1\\
    t_2^{a_2^k}(a_1^{x_1}a_2^{x_2}a_3^{x_3})&=x_2-1=t_2-k \\
    t_3^{a_2^k}(a_1^{x_1}a_2^{x_2}a_3^{x_3})&=x_3=t_3 \\
    t_1^{a_3^k}(a_1^{x_1}a_2^{x_2}a_3^{x_3})&=x_1=t_1\\
    t_2^{a_3^k}(a_1^{x_1}a_2^{x_2}a_3^{x_3})&=x_2=t_2\\
    t_3^{a_3^k}(a_1^{x_1}a_2^{x_2}a_3^{x_3})&=x_3-1=t_3-k 
    \end{align*}
    so we have to only add the constant polynomial $1$ (the constant polynomial $k$ is a multiple of it) to the set $\{t_1,t_2,t_3\}$ to obtain the $\Q$-basis $(t_1,t_2,t_3,1)$ for the $H$-submodule. We obtain the mapping
    $$a_1 \mapsto \left[\begin{array}{cccc} 1&0&0&-1\\ 0&1&1&0 \\ 0&0&1&0\\ 0&0&0&1  \end{array}\right],
    \qquad\! a_2\mapsto \left[\begin{array}{cccc} 1&0&0&0 \\ 0&1&0&0 \\ 0&0&1&-1\\ 0&0&0&1  \end{array}\right],
    \qquad\!
    a_3\mapsto \left[\begin{array}{cccc} 1&0&0&0\\ 0&1&0&-1 \\ 0&0&1&0\\ 0&0&0&1  \end{array}\right].$$
    Thus, $H$ can be embedded into $UT_4(\Z)$. 
\end{example}
	
	Now, let $(a_1,\dots, a_n)$ be an arbitrary Mal'cev basis of $\UTZ{m}$ such that $\Set{a_i}{1\leq i \leq n} = \Set{s_{i,j}}{1\leq i < j \leq m}$. That means we allow any ordering of the $s_{i,j}$~-- as long as it is still a Mal'cev basis. To keep notation simple, we write $x_{i,j} = x_k$ if $a_k = s_{i,j}$~-- likewise for $y_{i,j}$, $t_{i,j}$, and $q_{i,j}$. For the moment we use the double indices and the single indices interchangeably.
	
	Let us take a look at the multiplication polynomials $q_{i}\in \Z[x_1,\dots, x_n,y_1, \dots y_n ]$ defined by 
	$$ a_1^{x_1}\cdots a_n^{x_n} \cdot a_1^{y_1} \cdots a_n^{y_n}= a_1^{q_1} \cdots a_n^{q_n}.$$
	These multiplication polynomials are computed by applying the generalized collection process.
	Recall that the polynomials computed by Nickel's algorithm are in the span of these general multiplication polynomials after substituting the variables $y_1, \dots, y_n$ by integer values.
	
	\begin{lem}\label{lem:monomials}
		Let $1 \leq k < \ell \leq m$ and let $\omega$ be a monomial of the  multiplication polynomial $q_{k,\ell}$ (as described above). Moreover, let $d$ denote the degree of $\omega$. Then there are numbers $\lambda_\nu \in \oneset{k,\dots, \ell}$ for $0 \leq \nu \leq d$ with $\lambda_0=k$, $\lambda_d = \ell$, and $\lambda_{\nu-1} < \lambda_\nu$ for $1 \leq \nu \leq d$ such that 
		\begin{align}
			\omega = \prod_{\nu = 1}^d X_{\lambda_{\nu - 1},\lambda_\nu} \label{eq:monomial}
		\end{align}
		where $ X_{\lambda_{\nu - 1},\lambda_\nu} \in \oneset{ x_{\lambda_{\nu - 1},\lambda_\nu}, y_{\lambda_{\nu - 1},\lambda_\nu}}$.
	\end{lem}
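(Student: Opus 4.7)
The plan is to establish a stronger invariant by induction on the steps of the generalized collection process. Call a polynomial $P \in \Z[x_1,\dots,x_n,y_1,\dots,y_n]$ \emph{admissible for $[p,q]$} if every monomial of $P$ has the chain form described in (\ref{eq:monomial}), i.e.\ $\prod_{\nu=1}^{d}X_{\lambda_{\nu-1},\lambda_\nu}$ with $p=\lambda_0<\lambda_1<\cdots<\lambda_d=q$ and $X_{\cdot,\cdot}\in\{x_{\cdot,\cdot},y_{\cdot,\cdot}\}$. The invariant I would prove is: at every stage of collection applied to $a_1^{x_1}\cdots a_n^{x_n}\cdot a_1^{y_1}\cdots a_n^{y_n}$, every factor $s_{p,q}^{P}$ appearing in the partially collected word has $P$ admissible for $[p,q]$. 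Since $q_{k,\ell}$ is by definition the final exponent of $s_{k,\ell}$, the lemma is immediate from the invariant.

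The base case is trivial: in the initial word the exponents are the single variables $x_{p,q}$ and $y_{p,q}$, which are admissible with $d=1$ via the one-step chain $p<q$.

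For the inductive step I would handle the three cases of (\ref{eq:commutators}). The trivial case ($i\neq\ell$, $j\neq k$) only permutes existing factors and preserves the invariant. In the case $j=k$, the rule replaces $s_{i,j}^{P}s_{j,\ell}^{Q}$ by $s_{j,\ell}^{Q}s_{i,j}^{P}s_{i,\ell}^{PQ}$; the existing factors keep their (admissible) exponents, so the only thing to check is the new factor $s_{i,\ell}^{PQ}$. A monomial of $PQ$ is a product of a monomial $\mu$ of $P$ (a chain $i=\lambda_0<\cdots<\lambda_{d_1}=j$) and a monomial $\mu'$ of $Q$ (a chain $j=\lambda_0'<\cdots<\lambda_{d_2}'=\ell$). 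Concatenating these two chains at their common endpoint $j$ yields a strictly increasing chain from $i$ to $\ell$ — strictness at the junction follows from $\lambda_{d_1}=j=\lambda_0'<\lambda_1'$ — and the corresponding product of variables is exactly $\mu\mu'$, proving it is admissible for $[i,\ell]$. The case $i=\ell$ is handled symmetrically, with the concatenation occurring at the common endpoint $i=\ell$ and the sign absorbed into the integer coefficient. I also need to note that combining adjacent factors $s_{p,q}^{P}s_{p,q}^{P'}$ into $s_{p,q}^{P+P'}$ preserves admissibility, since every monomial of $P+P'$ is a monomial of $P$ or $P'$.

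The only genuine subtlety I expect is bookkeeping: the generalized collection process acts on factors whose exponents are already polynomials (not just integers) once the process has been running, so one must formulate the invariant for polynomial exponents from the start, and be careful that nothing in the $\Z[x,y]$-arithmetic introduces monomials that aren't products of a monomial of $P$ with a monomial of $Q$. Once the invariant is phrased in this polynomial form, the argument reduces to the elementary combinatorics of concatenating strictly increasing chains of indices, which is what the statement of the lemma asserts.
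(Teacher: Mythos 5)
Your proof is correct and follows essentially the same strategy as the paper's: both induct on the steps of the generalized collection process, showing the chain-form invariant for all exponents, with the key step being that a monomial of the product $p_{i,j}\,p_{j,k}$ concatenates a strictly increasing chain from $i$ to $j$ with one from $j$ to $k$. Your version is slightly more careful in spelling out all three cases of (\ref{eq:commutators}) and in noting that merging $s_{p,q}^{P}s_{p,q}^{P'}$ into $s_{p,q}^{P+P'}$ also preserves the invariant, but the underlying argument is identical.
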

	
	\begin{proof}
		The polynomials $q_{k,\ell}$ can be computed by the generalized collection process~-- that means an iterative application of the commutation rules of (\ref{eq:commutators}). We are going to show by induction that (\ref{eq:monomial}) holds for all polynomials appearing during this process in the exponent of any element of the Mal'cev basis. Obviously, (\ref{eq:monomial}) holds in the beginning $a_1^{x_1}\cdots a_n^{x_n} \cdot a_1^{y_1} \cdots a_n^{y_n}$ for all the exponents $x_k = x_{i,j}, y_k = y_{i,j}$ (with $d=1$).
		Exchanging two commuting elements does not change the property (\ref{eq:monomial}). 
		Exchanging $s_{i,j}^{p_{i,j}}$ and $ s_{j,k}^{p_{j,k}}$ introduces a new factor $ s_{i,k}^{p_{i,j}p_{j,k}}$ (resp.\ $ s_{i,k}^{-p_{i,j}p_{j,k}}$). By induction, we know that (\ref{eq:monomial}) holds for all monomials of $p_{i,j}$ and $ p_{j,k}$. Thus, every monomial of $p_{i,j}p_{j,k}$ is of the form
		$\left(\prod_{\nu = 1}^d X_{\lambda_{\nu - 1},\lambda_\nu}\right) \cdot \left(\prod_{\nu = 1}^{d'} X_{\lambda'_{\nu - 1},\lambda'_{\nu }}\right)$ for some $\lambda_\nu \in \oneset{i,\dots, j}$, $\lambda'_{\nu } \in \oneset{j,\dots, k}$ and $d,d' \in \N$. Since $\lambda_d= j = \lambda'_0$,  every monomial of $p_{i,j}p_{j,k}$ is of the desired form (\ref{eq:monomial}).
	\end{proof}

	\subsection{Upper bounds}	
	
	\begin{thm}\label{thm:upperboundUT}
		The dimension of the embedding of $\UTZ{m}$ produced by Nickel's algorithm is bounded by $3^{m}$ for every ordering of the Mal'cev basis $\Set{s_{i,j}}{1\leq i < j \leq m}$.
	\end{thm}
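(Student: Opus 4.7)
The plan is to show that every polynomial produced by Nickel's algorithm on $\UTZ{m}$ lies in the span of a combinatorially restricted family of monomials whose cardinality can be bounded by $3^m$.

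Call a monomial \emph{admissible} if it is either the empty product $1$ or a product $\prod_{s=1}^{r} x_{i_s,j_s}$ with
\[
1 \leq i_1 < j_1 \leq i_2 < j_2 \leq \cdots \leq i_r < j_r \leq m,
\]
and let $\mathcal{A}$ denote their $\Q$-span. The first step is to verify that $\mathcal{A}$ is invariant under the action of every generator $a_j$. By Lemma~\ref{lem:monomials}, the specialisation $q_{k,\ell}^{(j)}$ obtained by substituting $y_p=-\delta_{p,j}$ in the multiplication polynomial $q_{k,\ell}$ is a linear combination of admissible monomials whose variables $x_{a,b}$ all satisfy $k \leq a < b \leq \ell$. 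When $a_j$ acts on an admissible monomial $\prod_{s=1}^{r} x_{i_s,j_s}$, each factor $x_{i_s,j_s}$ is replaced by $q_{i_s,j_s}^{(j)}$; because the intervals $[i_s,j_s]$ meet only possibly at shared endpoints, concatenating the resulting local chains yields another admissible monomial on $\{1,\dots,m\}$. The coordinate functions $t_{i,j}=x_{i,j}$ are themselves admissible, so the faithful $G$-module $M$ of Lemma~\ref{le:faithful module} sits inside the finite-dimensional subspace $\mathcal{A}$.

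For the second step I would bound $|\mathcal{A}|$ by encoding each admissible monomial as a word $w_1w_2\cdots w_m \in \{0,L,R,B\}^m$: put $w_p=0$ if $p$ is not an endpoint, $w_p=L$ if $p=i_s$ is a fresh left endpoint ($p\neq j_{s-1}$), $w_p=R$ if $p=j_s$ is a pure right endpoint ($p\neq i_{s+1}$), and $w_p=B$ if $p=j_s=i_{s+1}$ is a shared endpoint. A left-to-right scan recovers the arcs, so the encoding is injective. Moreover, the word must obey a two-state automaton that tracks whether an arc is currently open: from the ``closed'' state only $\{0,L\}$ are legal, and from the ``open'' state only $\{0,R,B\}$, so at each position there are at most three admissible letters. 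A one-line induction on $m$ then gives $|\mathcal{A}|\leq 3^m$, and hence $\dim M \leq 3^m$.

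The main obstacle is the closure step: a priori, acting by $a_j$ could create new $x$-factors whose intervals cross those of other factors of the original admissible monomial and destroy the chain structure. The non-crossing condition $j_s\leq i_{s+1}$ is precisely what prevents this, because each local polynomial $q_{i_s,j_s}^{(j)}$ only introduces variables supported in $[i_s,j_s]$; the intervals remain disjoint up to shared endpoints and the product of local chains is again admissible. Note that since $\mathcal{A}$ is finite-dimensional and invertibility of $a_j$ forces each restriction $a_j\colon \mathcal{A}\to\mathcal{A}$ to be bijective, stability under the positive generators also entails stability under their inverses, so $\mathcal{A}$ is genuinely $G$-invariant independently of the chosen ordering of the Mal'cev basis.
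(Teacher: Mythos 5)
Your argument is correct and rests on the same key structural input as the paper's proof, namely Lemma~\ref{lem:monomials}, but the bookkeeping is organized differently enough to be worth comparing. The paper does not prove $G$-invariance of any explicit subspace; it simply observes that the module $M$ lies in the span of the monomials of the $q_{k,\ell}$ after specializing the $y$-variables, and then it counts the $\{x,y\}$-decorated chains of Lemma~\ref{lem:monomials} directly via a binomial identity, $N_{k,\ell}=\sum_{d}\binom{\ell-1-k}{d-1}2^{d}=2\cdot 3^{\ell-k-1}$, and sums over pairs $(k,\ell)$. You instead carve out the concrete span $\mathcal{A}$ of ``admissible'' monomials (partial chains of arcs allowed to leave gaps), verify closure under each $a_j$ by factoring the substitution $x_{i_s,j_s}\mapsto q_{i_s,j_s}^{(j)}$ into non-crossing intervals, and count via a two-state automaton encoding. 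This buys two small things: the invariance statement is cleaner and more reusable, and your count is in fact tighter — the admissible monomials grow like $\bigl((3+\sqrt{5})/2\bigr)^{m}$, whereas the paper's chain count overcounts because distinct decorated chains collapse to the same $x$-monomial after specialization. The cost is the extra (but correctly supplied) observation that a finite-dimensional subspace stable under the injective action of each $a_j$ is automatically stable under $a_j^{-1}$, so that $\mathcal{A}$ is genuinely $G$-invariant and $M\subseteq\mathcal{A}$ follows. The crucial closure claim — that a product of admissible monomials supported on pairwise non-crossing intervals $[i_s,j_s]$ is again admissible — is correctly identified as the crux and correctly justified; the point is exactly that $q_{i_s,j_s}^{(j)}$ introduces only variables $x_{a,b}$ with $i_s\leq a<b\leq j_s$, so the local chains splice without crossings.
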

	\begin{proof}
		The $G$-module generated by the coordinate functions $\Set{t_{i,j}}{1\leq i < j \leq m}$ can be obtained as the span of all polynomials $q_{k,\ell}$ for $1 \leq k < \ell \leq m$ where the variables $y_{i,j}$ are substituted by arbitrary integer values. Obviously, this is contained in the span of all monomials of the  $q_{k,\ell}$ for $1 \leq k < \ell \leq m$ where the variables $y_{i,j}$ are substituted by integer values. If we take any monomial and substitute variables by arbitrary integer values, we obtain a multiple of when we substitute the same variables by $1$. Thus, we may assume that every variable $y_{i,j}$ is substituted by $1$. 
		
		Hence, we simply need to count the number of different monomials of the form (\ref{eq:monomial}) in Lemma~\ref{lem:monomials}. Clearly this number does not depend on the ordering of the Mal'cev basis. 
		Let $N_{k,\ell}$ be the number of monomials of the form (\ref{eq:monomial}) with $\lambda_0=k$ and $\lambda_d= \ell$. Then we have 
		$ \sum_{1\leq k < \ell \leq m} N_{k,\ell}$
		as an upper bound of the dimension of the embedding.
		
		Now, each pick of indices $\lambda_\nu \in \oneset{k,\dots, \ell}$ for $0 \leq \nu \leq d$ with $\lambda_0=k$, $\lambda_{\nu - 1} < \lambda_{\nu }$ and $\lambda_d= \ell$ corresponds to a $d-1$-element subset of $\oneset{k+1,\dots, \ell-1}$. 
		Moreover, for each pick of indices there are precisely $2^d$ ways to assign variables $x_{i,j}$ or $y_{i,j}$ to the $X_{i,j}$s.
		As $d$ ranges from $1$ to $\ell - k$, we obtain \begin{align*}
		N_{k,\ell} = \sum_{d = 1}^{\ell - k}\binom{\ell - 1 - k}{d-1}2^d =  2\sum_{d = 0}^{\ell -1 - k}\binom{\ell - 1 - k}{d}2^d = 2 (1 + 2)^{\ell - 1 - k}.
		\end{align*}
		Thus, it follows
		\begin{align*}
			\sum_{1\leq k < \ell \leq m} N_{k,\ell}  = 2\!\!\!\sum_{1\leq k < \ell \leq m} 3^{\ell - k - 1}
			 = 2\!\sum_{1 < \ell \leq m}\, \sum_{1\leq i < \ell} 3^{i - 1}
		 \leq \sum_{1 < \ell \leq m} 3^{\ell - 1}
			 \leq 3^{m}.%
		\end{align*}%
	\end{proof}

	\subsection{Lower bounds}
	For proving lower bounds, we use the Mal'cev basis for $G= ( a_1, a_2, \dots,  a_n )$ (which is the standard Mal'cev basis, see e.\,g.\ \cite{Nickel06}) with
	\begin{align*}
		&&	&&a_k &= s_{i,j} &\text{for }&k = i + \sum_{\ell = 1}^{j-i-1} (m-\ell).&&
	\end{align*}
	Thus, the order of the basis elements can be depicted as follows:
	$$
	\begin{pmatrix}
	1  &a_1& a_m &  & \cdots & a_{n} \\
	& 1 & a_2  &  a_{m+1}& &\\
	&   &  1 & a_3&  \ddots& \vdots \\
	&   &       &  1    & \ddots & a_{2m-3} \\
	& 0 &     &  & \ddots     & \hspace{-2mm}a_{m-1} \\
	&   &      & &        & 1
	\end{pmatrix}
	$$ 
In particular, we have
	$a_i = s_{i(i+1)}$ for $1 \leq i\leq m-1$.	
	From now on we will denote 
	\begin{align*}
		a_1^{x_1} a_2^{x_2} \cdots a_{m-1}^{x_{m-1}} a_m^{x_m} &\cdots a_{2m-2}^{x_{2m-2}} \cdots a_{n-2}^{x_{n-2}} a_{n-1}^{x_{n-1}} a_n^{x_n} 
		\\&= s_{1,2}^{x_{1,2}} s_{2,3}^{x_{23}} \cdots s_{m-1,m}^{x_{m-1,m}} s_{1,3}^{x_{1,3}} \cdots s_{1,m-1}^{x_{1,m-1}} s_{2,m}^{x_{2,m}} s_{1,m}^{x_{1,m}}.
	\end{align*} 
	That means, as in the previous section, instead of variables with a single index $x_k$ we write the variables with two indices $x_{i,j}$.
	
Following the idea used in the embedding, we are going to have a look at the action of $a_i^{k_i}$ on the last coordinate function $t_n$ for $1 \leq i \leq \floor{\frac{m}{2}} - 1$ and provide a lower bound for the number of linearly independent polynomial exponents. That means we apply the generalized collection process to
	\begin{align}
		a_1^{x_1}\cdots a_n^{x_n}\cdot a_i^{-k}.\label{eq:rightmult}
	\end{align} 
	This can be done in two steps: first, $ a_i^{-k}$ is moved to the left, and the respective commutators are introduced. In the second step, the newly introduced commutators are moved to the right until they remain at their correct place. It is a crucial property of a Mal'cev basis that these newly introduced commutators only ``travel'' to the right.
	For our purposes $k=1$ in (\ref{eq:rightmult}) is sufficient.
	To prove our result, Theorem \ref{thm:complexity}, we will first look at the case when $i=1$, which will be the starting point to see that there are exponentially many basis elements. The reason why we look at the case $i=1$, is simply because the polynomial exponent with the highest degree will appear in this case since $a_1^{-1}$ is the element that has to ``travel'' over all the elements $a_j^{x_j}$ for $2 \leq j \leq n$ in (\ref{eq:rightmult}). So we start with
	\begin{align*}
		a_1^{x_1} a_2^{x_2}\cdots {}& a_{m-1}^{x_{m-1}} a_m^{x_m} \cdots a_{n-1}^{x_{n-1}} a_n^{x_n} \cdot a_1^{-1} \\&=
		s_{1,2}^{x_{1,2}}s_{2,3}^{x_{23}}\cdots s_{m-1,m}^{x_{m-1,m}} s_{1,3}^{x_{1,3}} \cdots s_{1,m-1}^{x_{1,m-1}} s_{2,m}^{x_{2,m}} s_{1,m}^{x_{1,m}} \cdot s_{1,2}^{-1}.
	\end{align*} 
	Note that $s_{1,2}$ commutes with all other elements except $s_{2,3},s_{2,4}, \dots , s_{2,m}$, and  we know that by (\ref{eq:commutators})
	$$ s_{2,j}^{x_{2,m}} s_{1,2}^{-1} = s_{1,2}^{-1} s_{2,j}^{x_{2,m}}\, s_{1,j}^{x_{2,m}}$$
	for $3 \leq j\leq m$. 
	So, as $s_{1,2}$ moves to the left in the product, the first element that it has to pass over and that it does not commute with is $s_{2,m}$~-- we obtain
	\begin{align*}
		s_{1,2}^{x_{1,2}} s_{2,3}^{x_{2,3}} \cdots {}& s_{m-1,m}^{x_{m-1,m}} s_{1,3}^{x_{1,3}} \cdots s_{1,m-1}^{x_{1,m-1}} s_{2,m}^{x_{2,m}} s_{1,m}^{x_{1,m}}\cdot s_{1,2}^{-1} 
		\\ &= s_{1,2}^{x_{1,2}} s_{2,3}^{x_{2,3}}\cdots s_{m-1,m}^{x_{m-1,m}} s_{1,3}^{x_{1,3}} \cdots s_{1,m-1}^{x_{1,m-1}} \cdot s_{1,2}^{-1} \cdot s_{2,m}^{x_{2,m}} \, s_{1,m}^{x_{1,m} + x_{2,m}}.
	\end{align*} 
Next it does not commute with $s_{2,m-1}^{x_{2,m-1}}$~-- however, the commutator is $s_{1,m-1}^{x_{2,m-1}}$; so, the exponent of $s_{1,m}$ does not change. Continuing this way moving up to $s_{2,\ell}$ for $\ell=\floor{\frac{m}{2}}+1$, we have, for example, when $m$ is even; 
	\begin{align*}
		s_{1,2}^{x_{1,2}} s_{2,3}^{x_{2,3}} \cdots {}& s_{m-1,m}^{x_{m-1,m}} s_{1,3}^{x_{1,3}} \cdots s_{1,m-1}^{x_{1,m-1}} s_{2,m}^{x_{2,m}} s_{1,m}^{x_{1,m}}\cdot s_{1,2}^{-1} \\
		&=s_{1,2}^{x_{1,2}} s_{2,3}^{x_{2,3}} \cdots  s_{m-1,m}^{x_{m-1,m}} s_{1,3}^{x_{1,3}} \cdots  s_{1,\ell-1}^{x_{1,\ell-1}} \cdot s_{1,2}^{-1} \cdot s_{2,\ell}^{x_{2,\ell}} \cdots s_{\ell,m}^{x_{\ell,m}}  s_{1,\ell}^{x_{1,\ell} + x_{2,\ell}} \\
		& \qquad\cdots s_{1,m-1}^{x_{1,m-1} + x_{2,m-1}} s_{2,m}^{x_{2,m}} s_{1,m}^{x_{1,m} + x_{2,m}+x_{2,\ell} x_{\ell,m}}.
	\end{align*}
	That means for the first time a non-linear polynomial appears.
	As $s_{1,2}^{-1}$ moves further to the left, the exponents of $s_{1,j}$'s for $ 3 \leq j < m$ become quite complicated to express; therefore, we are going to only focus on what is being added to the exponent of $s_{1,m}$ once $s_{1,2}^{-1}$ passes all $s_{2,j}$~-- we use $\displaystyle\star$ as a placeholder for any polynomial:
	\newcommand{\sOneTwo}{{\color{red} s_{1,2}^{-1} }}
	\begin{align*}
		s_{1,2}^{x_{1,2}} &\cdots s_{m-1,m}^{x_{m-1,m}} s_{1,3}^{x_{1,3}} \cdots s_{1,m-1}^{x_{1,m-1}} s_{2,m}^{x_{2,m}} s_{1,m}^{x_{1,m}}\cdot \sOneTwo \\
		&= s_{1,2}^{x_{1,2}} s_{2,3}^{x_{2,3}} \cdot \sOneTwo \cdot s_{3,4}^{x_{3,4}} \cdots s_{m-1,m}^{x_{m-1,m}}\, s_{1,3}^{\displaystyle\star} \cdots s_{1,m-1}^{\displaystyle\star} s_{2,m}^{x_{2,m}} \, s_{1,m}^{x_{1,m} +x_{2,m} + P'_{1,m} } \\
		&= s_{1,2}^{x_{1,2}} \cdot \sOneTwo \cdot s_{2,3}^{x_{2,3}} \cdot {\color{blue} s_{1,3}^{x_{2,3}}} \cdot s_{3,4}^{x_{3,4}} \cdots s_{m-1,m}^{x_{m-1,m}}\, s_{1,3}^{\displaystyle\star} \cdots s_{1,m-1}^{\displaystyle\star} s_{2,m}^{x_{2,m}}\, s_{1,m}^{x_{1,m} +x_{2,m} + P'_{1,m} } \\
		&= s_{1,2}^{x_{1,2}{\color{red}-1}} s_{2,3}^{x_{2,3}} s_{3,4}^{x_{3,4}} \cdot {\color{blue} s_{1,3}^{x_{2,3}}} \cdot {\color{blue} s_{1,4}^{x_{2,3}x_{3,4}}} s_{4,5}^{x_{4,5}} \cdots s_{m-1,m}^{x_{m-1,m}}\, s_{1,3}^{\displaystyle\star} \cdots s_{1,m-1}^{\displaystyle\star} s_{2,m}^{x_{2,m}} \, s_{1,m}^{x_{1,m} +x_{2,m} + P'_{1,m} } \\
		&= s_{1,2}^{x_{1,2}{\color{red}-1}} s_{2,3}^{x_{2,3}} s_{3,4}^{x_{3,4}} s_{4,5}^{x_{4,5}} \cdot {\color{blue} s_{1,5}^{x_{2,3}x_{3,4} x_{4,5}}} \cdot s_{5,6}^{x_{5,6}} \cdots s_{m-1,m}^{x_{m-1,m}}\, s_{1,3}^{\displaystyle\star} \cdots s_{1,m-1}^{\displaystyle\star} s_{2,m}^{x_{2,m}}\, s_{1,m}^{x_{1,m} +x_{2,m} + P''_{1,m} } \\
		&= \cdots \\
		&= s_{1,2}^{x_{1,2}{\color{red}-1}} s_{2,3}^{x_{2,3}} \cdots s_{m-1,m}^{x_{m-1,m}} \cdot {\color{blue} s_{1,m}^{x_{2,3}\cdots x_{m-1,m}}} \cdot s_{1,3}^{\displaystyle\star} \cdots s_{1,m-1}^{\displaystyle\star} s_{2,m}^{x_{2,m}} \, s_{1,m}^{x_{1,m} +x_{2,m} + P'''_{1,m} } \\
		&= s_{1,2}^{x_{1,2}{\color{red}-1}} s_{2,3}^{x_{2,3}} \cdots s_{m-1,m}^{x_{m-1,m}} \cdot s_{1,3}^{\displaystyle\star} \cdots s_{1,m-1}^{\displaystyle\star} s_{2,m}^{x_{2,m}} \, s_{1,m}^{x_{1,m} + x_{2,m} + {\color{blue} x_{2,3} \cdots x_{m-1,m}} + P_{1,m} }. 
	\end{align*}
	%
	\newcommand{\q}{\ov q}
	\newcommand{\qm}{q}
	\newcommand{\xI}{x_{i,i+1}}
	Finally we have the polynomial exponent of $s_{1,m}$, which will be denoted by $\qm$, as
	$$\qm=x_{1,m} + x_{2,m} +\prod_{i=2}^{m-1} \xI + P_{1,m}.$$
	
	Note that $\qm = t_{1,m}^{s_{1,2}}$ where $t_{1,m} = t_n$ is the $n$-th coordinate function. Also notice that, as a consequence of Lemma~\ref{lem:monomials} (with $y_1=1, y_i=0$ for $i>1$), $P_{1,m}$ does not contain any linear monomials (and it is non-zero for $m$ large enough). 
	From now on, let $\mu = \floor{\frac{m}{2}}$.

	\begin{lem}\label{lem:abeliansubgroup}
		The subgroup generated by $\Set{s_{i,j}}{j - i \geq m - \mu} $ is abelian. 
	\end{lem}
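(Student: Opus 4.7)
The plan is to reduce the claim to pairwise commutativity of the generators and then to rule out the two potentially non-commuting cases of (\ref{eq:commutators}) by a short index-counting argument. Since a subgroup generated by a set of pairwise commuting elements is abelian, it suffices to verify that any two generators $s_{i,j}$ and $s_{k,\ell}$ with $j-i \geq m - \mu$ and $\ell - k \geq m - \mu$ commute.

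By (\ref{eq:commutators}), such a pair fails to commute only when $j = k$ (producing an extra factor $s_{i,\ell}$) or $i = \ell$ (factor $s_{k,j}$). In the first case I would write
\[ \ell - i = (\ell - j) + (j - i) \geq 2(m-\mu), \]
and combine this with the elementary inequality $2(m - \floor{m/2}) \geq m$, which holds for both parities of $m$. Since the matrix indices of $\UTZ{m}$ always satisfy $1 \leq i < \ell \leq m$, and hence $\ell - i \leq m - 1 < m$, this is a contradiction, so the subcase $j=k$ cannot occur. The case $i = \ell$ is entirely symmetric: the analogous computation gives $j - k = (j-i) + (\ell - k) \geq 2(m-\mu) \geq m$, again contradicting $j - k \leq m - 1$.

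There is no real obstacle here — once the inequality $2(m - \floor{m/2}) \geq m$ is observed, both non-commuting cases become impossible for purely combinatorial reasons and the lemma follows. I note in passing that the threshold $m-\mu$ is essentially sharp: lowering it would already admit pairs of the form $s_{i,j}, s_{j,\ell}$ with $\ell \leq m$, whose commutator $s_{i,\ell}$ is nontrivial and lies inside the would-be abelian subgroup.
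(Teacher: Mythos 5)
Your argument is correct and matches the paper's proof in essence: both reduce to checking pairwise commutativity via (\ref{eq:commutators}) and then use the index-counting inequality $2(m-\mu)\geq m$ to rule out the cases $j=k$ and $i=\ell$. The only cosmetic difference is that the paper invokes symmetry to reduce to $j=k$ and bounds $\ell$ directly ($\ell > m$), whereas you treat both cases explicitly and bound the difference $\ell-i$; these are the same computation.
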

	
	\begin{proof}
		Assume $s_{i,j}, s_{k,\ell} \in \Set{s_{i,j}}{j - i \geq m- \mu}$ do not commute. By (\ref{eq:commutators}), it suffices to look at the case $j =k$. Then we have $$\ell \geq m- \mu + j \geq m- \mu + m- \mu + i \geq 2m- 2 \mu + 1 > m.$$%
	\end{proof}
	
	Note that the basis elements $\Set{s_{i,j}}{j - i \geq m - \mu}$ form a suffix of the Mal'cev basis. This observation leads to the following:
	\begin{lem}\label{lem:linearmonomials}
		Among all polynomials obtained from the coordinate functions by the action of $G$, the variables $x_{i,j}$ for $j - i \geq m - \mu$ appear only in linear monomials.
	\end{lem}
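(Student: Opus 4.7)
The plan is to prove the claim for each generator $t_p^g$ of the $G$-module individually (for $p \in \{1,\ldots,n\}$ and $g \in G$) and then conclude by $\Q$-linearity, since the property that a given variable appears only in linear monomials is preserved under $\Q$-linear combinations. The structural fact to exploit is that the subgroup $H = \gen{s_{i,j} : j - i \geq m - \mu}$ is not only abelian (by Lemma~\ref{lem:abeliansubgroup}) but also normal in $G$; moreover, its basis elements form the suffix $a_{p_0},\ldots,a_n$ of the Mal'cev basis, where $p_0$ denotes the smallest index $p$ such that $a_p = s_{i,j}$ with $j - i \geq m - \mu$.

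First we verify normality of $H$. For $s_{i,j} \in H$ and any basis element $s_{k,\ell}$, the commutation rules (\ref{eq:commutators}) give $s_{k,\ell}\, s_{i,j}\, s_{k,\ell}^{-1} = s_{i,j}\cdot C$, where $C$ is either $1$, or $s_{i,\ell}^{\pm 1}$ (if $j = k$), or $s_{k,j}^{\pm 1}$ (if $i = \ell$). In the first non-trivial case $\ell - i = (\ell - j) + (j - i) \geq 1 + (m-\mu)$, and in the second $j - k = (j - i) + (\ell - k) \geq 1 + (m-\mu)$, so $C \in H$ in both cases; hence $H$ is normal in $G$.

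The main computation is then to put $a_1^{x_1}\cdots a_n^{x_n}\cdot g^{-1}$ into Mal'cev normal form by factoring the left-hand side as $\tilde g \cdot h$ with $\tilde g = a_1^{x_1}\cdots a_{p_0-1}^{x_{p_0-1}}$ depending only on the early variables and $h = a_{p_0}^{x_{p_0}}\cdots a_n^{x_n} \in H$, and then rewriting
\[
\tilde g\, h\, g^{-1} \;=\; (\tilde g\, g^{-1}) \cdot (g h g^{-1}).
\]
Because $H$ is normal abelian, conjugation by $g$ is a $\Z$-linear automorphism of $H$ in exponent coordinates, so $g h g^{-1} = a_{p_0}^{x'_{p_0}}\cdots a_n^{x'_n}$ with each $x'_p$ a $\Z$-linear form in the late variables $x_{p_0},\ldots,x_n$ (coefficients depending on $g$ alone). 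On the other hand, $\tilde g\, g^{-1}$ depends only on $x_1,\ldots,x_{p_0-1}$, so its Mal'cev normal form $a_1^{\tilde r_1}\cdots a_n^{\tilde r_n}$ has every $\tilde r_p$ a polynomial in these early variables only, hence contains no late $x$'s. Since $a_{p_0},\ldots,a_n$ pairwise commute, the product of these two factors is immediately in Mal'cev normal form, giving $r_p = \tilde r_p$ for $p < p_0$ and $r_p = \tilde r_p + x'_p$ for $p \geq p_0$; in either case the late variables appear in $t_p^g = r_p$ only as linear monomials. The one genuine subtlety is spotting the decomposition $\tilde g \cdot h$ and recognizing that the combination of normality and abelianness of $H$ is precisely what prevents cross terms between early and late variables from ever arising during the collection.
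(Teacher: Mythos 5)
Your proof is correct, and it takes a genuinely different route from the paper's. The paper's proof works directly at the level of the generalized collection process: it invokes Lemma~\ref{lem:monomials} to argue that a late variable $x_{i,j}$ can occur only as the exponent of a late basis element, then tracks what happens as the letters of $g^{-1}$ are moved left and the resulting commutators moved right, using Lemma~\ref{lem:abeliansubgroup} together with the suffix property to prevent any mixing. Your proof instead isolates a stronger structural fact that the paper uses only implicitly, namely that $H=\gen{s_{i,j}: j-i\geq m-\mu}$ is a \emph{normal} abelian subgroup (in fact $H=\Gamma_{m-\mu}(G)$, a term of the lower central series, which gives normality for free), and then exploits the clean algebraic factorization $\tilde g\,h\,g^{-1}=(\tilde g\,g^{-1})(g\,h\,g^{-1})$. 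This buys a more conceptual, self-contained argument: normality of the abelian suffix makes conjugation $\Z$-linear in exponent coordinates and keeps the two factors from ever sharing variables, so no careful bookkeeping of the collection is needed and Lemma~\ref{lem:monomials} is not invoked at all. You correctly handle the remaining details --- that the product of the two factors is already in Mal'cev normal form because $H$ is abelian, and the $\Q$-linearity step needed to pass from individual $t_p^g$ to arbitrary elements of the module. The one ingredient you do not re-derive (that the $s_{i,j}$ with $j-i\geq m-\mu$ form a suffix of the standard Mal'cev basis, so that the split $\tilde g\cdot h$ exists) is justified in the paper in the sentence immediately preceding the lemma, so using it is fine.
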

	\begin{proof}
	It follows from Lemma~\ref{lem:monomials} that variables $x_{i,j}$ for $j - i \geq m - \mu$ can only occur as exponents of some $ s_{k,\ell} \in \Set{s_{i,j}}{j - i \geq m- \mu}$.
	When moving some $s_{i,j}^{-1}$ to the left, only linear monomials are introduced. When commutators of $s_{i,j}^{-1}$ and something else move to the right, they either belong to the subgroup generated by $\Set{s_{i,j}}{j - i \geq m- \mu}$~-- and, thus, by the previous lemma commute with everything in that subgroup~--, or they to not belong to that subgroup and, therefore, remain on the left of the elements $\Set{s_{i,j}}{j - i \geq m- \mu}$ (since $\Set{s_{i,j}}{j - i \geq m - \mu}$ form a suffix of the Mal'cev basis).
	\end{proof}

	
	Next, we look at the action on the multiplication polynomials $q_{i,j}$.
	\begin{lem}\label{lem:longfactor}
		Let $\lambda \in \oneset{1, \dots, m-1}$ and $(i,j) \neq (m-1,m)$ and let $g$ be in the subgroup generated by $\Set{s_{i,i+1}}{1\leq i \leq \lambda}$. Then any monomial of $q_{i,j}^{g}$ which contains the variable $x_{m-1,m}$ also contains a factor $\prod_{i = \lambda+1}^{m-1} x_{i,i+1}$.
	\end{lem}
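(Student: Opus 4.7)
The plan is induction on the length $r$ of $g$ as a word in $\{s_{i, i+1}^{\pm 1} : 1 \leq i \leq \lambda\}$. The cases $\lambda \geq m - 2$ are immediate: $\prod_{\ell = \lambda+1}^{m-1} x_{\ell, \ell+1}$ is either empty or equals $x_{m-1, m}$ itself, so automatically divides any monomial containing $x_{m-1, m}$. Assume $\lambda \leq m - 3$ throughout. To push the induction through I strengthen the hypothesis so as to also control $q_{m-1, m}^g$: at every stage $q_{m-1, m}^g = x_{m-1, m}$, while for every other pair $(i, j)$ each monomial of $q_{i, j}^g$ containing $x_{m-1, m}$ satisfies $j = m$ and is divisible by $\prod_{\ell = \lambda+1}^{m-1} x_{\ell, \ell+1}$. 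The base case $r = 0$ is trivial since $q_{i,j}^{1} = x_{i,j}$. For the inductive step with $g' = g \cdot s_{k, k+1}^{\epsilon}$ and $k \leq \lambda$, I collect $a_1^{q_1^g} \cdots a_n^{q_n^g} \cdot s_{k, k+1}^{\epsilon}$ into Mal'cev form and trace how $x_{m-1, m}$ can enter new exponents.

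The key observation is that $s_{k, k+1}$ commutes with $s_{m-1, m}$ (since $k + 1 \leq \lambda + 1 \leq m - 2 < m - 1$) and no commutator produced during the collection can itself equal $s_{m-1, m}$ (proper commutators have second index minus first index at least two). Hence the exponent of $s_{m-1, m}$ is preserved throughout, and $x_{m-1, m}$ enters a new exponent only through a commutation step
\[
s_{i, m-1}^{P} \cdot s_{m-1, m}^{x_{m-1, m}} \longrightarrow s_{m-1, m}^{x_{m-1, m}} \cdot s_{i, m-1}^{P} \cdot s_{i, m}^{P \cdot x_{m-1, m}},
\]
which automatically places the new monomial into an exponent whose second index is $m$. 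For this step to fire, $s_{i, m-1}^{P}$ must temporarily lie to the left of $s_{m-1, m} = a_{m-1}$, even though for $i < m - 1$ its Mal'cev position is strictly later. Inspecting the Mal'cev ordering, the only way to produce a fresh $s_{i, m-1}$ strictly to the left of $s_{m-1, m}$ is as the output of a commutator $s_{i, m-2}^{P'} \cdot s_{m-2, m-1}^{x_{m-2, m-1}}$, since every other partner $s_{j, m-1}$ with $j < m - 2$ sits strictly after $s_{m-1, m}$ in the Mal'cev order and so would place the introduced element to its right.

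Iterating this observation recursively, the chain of commutators producing the given monomial in $q_{i, m}^{g'}$ must be \emph{consecutive} from some index $j_0$ onward: $s_{i, j_0} \to s_{i, j_0 + 1} \to \cdots \to s_{i, m}$, each step being the commutator with $s_{j, j+1}^{x_{j, j+1}}$ in the first-diagonal portion. The monomial therefore acquires the factor $x_{j_0, j_0+1} \cdots x_{m-1, m}$. Its starting point $s_{i, j_0}^{P_0}$ itself must arise from the newly multiplied generator $s_{k, k+1}^{\epsilon}$, because by the induction hypothesis no earlier $q_{i, j_0}^{g}$ with $j_0 < m$ contains $x_{m-1, m}$. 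Examining the two cases of the commutation rule for $s_{k, k+1}^{\epsilon}$ yields either $j_0 = k + 1 \leq \lambda + 1$ (in which case the tail itself contains $x_{\lambda+1, \lambda+2} \cdots x_{m-1, m}$) or $j_0 = k + 2$ with $P_0 = -\epsilon\, q_{k+1, k+2}^g$ (the only sub-case of the second rule able to initiate a consecutive chain, since for $\beta > k + 2$ the introduced element $s_{k, \beta}$ lies strictly to the right of $s_{\beta, \beta+1}$). In the latter sub-case Lemma~\ref{lem:monomials} forces $q_{k+1, k+2}^g = x_{k+1, k+2} + c$ for a constant $c \in \Z$: the term $x_{k+1, k+2}$ supplies the missing factor, and the constant-term branch $c \cdot \prod_{\ell = k+2}^{m-1} x_{\ell, \ell+1}$ already contains $\prod_{\ell = \lambda+1}^{m-1} x_{\ell, \ell+1}$ whenever $k < \lambda$, while for $k = \lambda$ one has $c = 0$ because $s_{\lambda+1, \lambda+2}$ is not in the subgroup. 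Either way the monomial is divisible by $\prod_{\ell = \lambda+1}^{m-1} x_{\ell, \ell+1}$, closing the induction.

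The main obstacle is the rigorous verification of the consecutive-chain claim: one must check, position by position in the Mal'cev order, that an introduced commutator $s_{i, j+1}^{\bullet}$ lies strictly to the left of $s_{j+1, j+2}$ exactly when its parent commutator step had consecutive second indices. This reduces to an elementary comparison using the explicit formula $a_k = s_{i, j}$ with $k = i + \sum_{\ell=1}^{j-i-1}(m - \ell)$ for Mal'cev positions, and it is the place where the particular shape of the standard Mal'cev basis enters the argument.
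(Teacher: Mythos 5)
Your proposal is correct in spirit but follows a genuinely different route from the paper's. The paper argues by a compact contradiction: if $x_{m-1,m}$ appears in some $q_{i,j}^g$, it was introduced by a commutation of $s_{m-1,m}^{x_{m-1,m}}$ with some $s_{k,m-1}^p$ that had drifted to its left; the authors then show each monomial of $p$ must contain $x_{m-2,m-1}$ (else $s_{k,m-1}^\omega$ would have been produced entirely from $\langle s_{i,i+1}: i\leq m-3\rangle$, which commutes with $s_{m-1,m}$~-- a contradiction) and close with a one-line "the lemma follows by induction" descending through the columns $m-1, m-2, \ldots, \lambda+1$. You instead set up an outer induction on the word length of $g$, strengthen the hypothesis (adding $q_{m-1,m}^g=x_{m-1,m}$ and the $j=m$ condition from Lemma~\ref{lem:monomials}), and trace the generalized collection step for a single letter $s_{k,k+1}^{\pm 1}$, identifying the consecutive chain $s_{k,k+2}\to s_{k,k+3}\to\cdots\to s_{k,m}$ through the first superdiagonal as the only source of new $x_{m-1,m}$-monomials in any $q_{i,m}$. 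The two arguments share the same core mechanism~-- $x_{m-1,m}$ can only arrive via commutators that have bubbled consecutively along the first superdiagonal~-- but your outer induction gives the statement a clean base case and makes the "iterate" step precise, whereas the paper buys brevity by leaving the descent implicit. Both versions leave the same delicate step informal: you explicitly flag the consecutive-chain positional check as the obstacle, and the paper's corresponding claim ("$s_{k,\ell}^\omega$ is contained in $\langle s_{i,i+1}:i\le m-3\rangle$") is asserted with the same level of hand-waving, so the gap you acknowledge is comparable to, not worse than, what the paper leaves unproved. Two small inaccuracies worth flagging: with $g'=g\cdot s_{k,k+1}^{\epsilon}$ the collection should be against $s_{k,k+1}^{-\epsilon}$ (or take $g'=s_{k,k+1}^{\epsilon}g$); and the chain steps multiply by $q_{j,j+1}^{g}=x_{j,j+1}+c_j$ rather than literally $x_{j,j+1}$, with $c_j=0$ only for $j>\lambda$~-- this does not affect the conclusion since every monomial of $\prod_{j}(x_{j,j+1}+c_j)$ still picks up the full tail $\prod_{\ell=\lambda+1}^{m-1}x_{\ell,\ell+1}$, but it should be stated correctly.
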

	\begin{proof}
		For $\lambda = m-2$, the statement is trivial. Hence, let $\lambda + 1 < m - 1$.
		If the variable $x_{m-1,m}$ appears in some  $q_{i,j}^{g}$ with $(i,j) \neq (m-1,m)$, it must have been first introduced as a commutator of some $s_{k,\ell}^p$ and $s_{m-1,m}^{x_{m-1,m}}$ for some polynomial $p$. In particular, $\ell = m-1$. We show that every monomial of $p$ contains the variable $x_{m-2,m-1}$; the lemma then follows by induction.
		
		Indeed, assume that there is a monomial $\omega$ in $p$ which does not contain a variable $x_{m-2,m-1}$. Then, in particular, $s_{k,\ell}^{\omega}$ cannot be written as a commutator of $s_{m-2,m-1}^{x_{m-2,m-1}}$. However, by the ordering of the Mal'cev basis and the choice of $g$, this means that $s_{k,\ell}^{\omega}$ is contained in the subgroup generated by $\Set{s_{i,i+1}}{i \leq m-3}$, which commutes with $s_{m-1,m}$~-- a contradiction.
	\end{proof}

	Now, let us return to the polynomial $\qm = t_{1,m}^{s_{1,2}}$. As we computed above, we have $\qm = x_{1,m} + x_{2,m} + \prod_{i=2}^{m-1} \xI +P_{1,m}$ for some polynomial $P_{1,m}$ which does not contain the monomial $\prod_{i=2}^{m-1} \xI$ and linear terms. We will denote $$\q=\prod_{i=2}^{m-1} \xI + P_{1,m} \equiv \qm \mod \text{ linear terms}.$$ Clearly $\q$ is contained in 
	the $G$-module generated by $\Set{t_{i,j}}{1\leq i < j \leq m}$. Later we will consider the action of a subgroups of $G$ on $\q$, but first we need to summarize some more facts on $\q$.

	\begin{lem}\ \label{lem:endvariables2}
		\begin{itemize}
			\item Every monomial of $\q$ contains a variable $x_{i,m}$ with $i > \mu$. 
			\item In $\q$ the variable $x_{m-1,m}$ only appears in the monomial $\prod_{i=2}^{m-1} \xI$.
		\end{itemize}
	\end{lem}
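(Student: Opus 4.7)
The plan is to handle both bullets via the structural description of monomials in Lemma~\ref{lem:monomials}, combining it with Lemma~\ref{lem:linearmonomials} for the first and Lemma~\ref{lem:longfactor} for the second. The starting observation is that $\qm=t_{1,m}^{s_{1,2}}$ is obtained from the Hall polynomial $q_{1,m}(x,y)$ by the substitution $y_{1,2}=-1$ and $y_{i,j}=0$ for $(i,j)\neq(1,2)$. By Lemma~\ref{lem:monomials} every monomial of $q_{1,m}$ has the form $\prod_{\nu=1}^{d} X_{\lambda_{\nu-1},\lambda_{\nu}}$ for some chain $1=\lambda_0<\lambda_1<\cdots<\lambda_d=m$; its unique factor with second index $m$ is $X_{\lambda_{d-1},m}$. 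After the substitution this factor must be $x_{\lambda_{d-1},m}$, since for $m\geq 3$ the variable $y_{i,m}$ vanishes for every $i$. Thus every monomial of $\qm$~-- hence of $\q$~-- contains precisely one variable of the form $x_{i,m}$ with $i\in\oneset{1,\dots,m-1}$.

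For the first bullet I would argue by contradiction. If a monomial $\omega$ of $\q$ had $\lambda_{d-1}\leq\mu$, the unique variable $x_{\lambda_{d-1},m}$ of $\omega$ would satisfy $m-\lambda_{d-1}\geq m-\mu$, and Lemma~\ref{lem:linearmonomials} would then force it to appear only in linear monomials of $\qm$. But $\q$ equals $\qm$ with its linear part stripped, so every monomial of $\q$ is nonlinear~-- a contradiction. Hence $\lambda_{d-1}>\mu$, which is exactly the first bullet.

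For the second bullet, let $\omega$ be a monomial of $\q$ containing $x_{m-1,m}$. Since $(1,m)\neq(m-1,m)$ for $m\geq 3$, applying Lemma~\ref{lem:longfactor} to $\qm$ with $\lambda=1$ and $g=s_{1,2}$ shows that $\omega$ contains $\prod_{i=2}^{m-1}x_{i,i+1}$ as a factor; write $\omega=\alpha\cdot\prod_{i=2}^{m-1}x_{i,i+1}$. By Lemma~\ref{lem:monomials}, $\deg(\omega)\leq m-1$, while the long product already has degree $m-2$, so $\deg(\alpha)\leq 1$. The case $\deg(\alpha)=0$ yields $\omega=\prod_{i=2}^{m-1}x_{i,i+1}$ directly. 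In the case $\deg(\alpha)=1$, the chain of $\omega$ has length $m-1$ from $1$ to $m$ and is therefore $(1,2,\dots,m)$, so $\alpha=X_{1,2}\in\oneset{x_{1,2},y_{1,2}}$. The subcase $\alpha=x_{1,2}$ is ruled out by the elementary identity $q_{1,m}(x,0)=x_{1,m}$ (right multiplication by the identity fixes the element), which forces the only $y$-free monomial of $q_{1,m}$ to be $x_{1,m}$ itself. The subcase $\alpha=y_{1,2}$ contributes after substitution only a scalar multiple of $\prod_{i=2}^{m-1}x_{i,i+1}$, not a distinct monomial.

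The main obstacle is the second bullet: Lemmas~\ref{lem:monomials} and~\ref{lem:longfactor} together still leave open the ``spurious'' monomial $x_{1,2}\prod_{i=2}^{m-1}x_{i,i+1}$, and I see no way to forbid it on purely combinatorial or degree grounds. The cleanest way I can think of to rule it out is the easily overlooked identity $q_{1,m}(x,0)=x_{1,m}$, which follows at once from $a_1^{x_1}\cdots a_n^{x_n}\cdot 1=a_1^{x_1}\cdots a_n^{x_n}$.
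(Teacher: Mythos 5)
Your proof is correct, and for the first bullet it coincides with the paper's argument: Lemma~\ref{lem:monomials} forces some $x_{i,m}$ into every monomial, and Lemma~\ref{lem:linearmonomials} pushes $i>\mu$ since $\ov q$ has no linear terms. For the second bullet the paper simply asserts it is ``a direct consequence of Lemma~\ref{lem:longfactor}''. As you observe, that lemma by itself only forces the factor $\prod_{i=2}^{m-1}x_{i,i+1}$ and leaves open the degree-$(m-1)$ monomial $x_{1,2}\prod_{i=2}^{m-1}x_{i,i+1}$, which would also contain that factor but falsify the second bullet. Your observation that $q_{1,m}(x,0)=x_{1,m}$ --- so the only $y$-free monomial of $q_{1,m}$ is $x_{1,m}$, which rules out the all-$x$ chain $x_{1,2}x_{2,3}\cdots x_{m-1,m}$, while the $y_{1,2}$-chain $y_{1,2}x_{2,3}\cdots x_{m-1,m}$ merely rescales $\prod_{i=2}^{m-1}x_{i,i+1}$ after the substitution $y_{1,2}=-1$ --- is precisely the step needed to close this case. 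So you take the same route as the paper (Lemmas~\ref{lem:monomials}, \ref{lem:linearmonomials}, and~\ref{lem:longfactor}) but correctly identify and fill a small gap that the paper's one-sentence justification of the second bullet leaves implicit.
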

	
	\begin{proof}
		By Lemma \ref{lem:monomials}, every monomial of $\q$ must contain a variable $x_{i,m}$ for some $i$. As $\q$ does not contain linear monimals, we know that $i > \mu$ by Lemma \ref{lem:linearmonomials}.  The  second statement is a direct consequence of Lemma \ref{lem:longfactor}.
	\end{proof}

	\newcommand{\subs}{S}
	\newcommand{\wS}{w_\subs}
	\newcommand{\tS}[2]{t_{#1,#2}^{w_S}}
	
	For some subset $\subs \subseteq \oneset{2, \dots , \mu}$ (recall that $\mu = \floor{\frac{m}{2}}$), we define $X_\subs = \prod_{i\in \subs} x_{i,i+1}$ and $\wS = \prod_{i\in \subs} s_{i,i+1}$ where the $s_{i,i+1}$ are ordered ascending by their index $i$ in the product.

	We are going to show that the polynomials $\Set{\q^{\wS}}{\subs \subseteq \oneset{2, \dots, \mu}}$ are all linearly independent and, thus, establish an exponential lower bound for Nickel's embedding. Computing the action of $\wS$ on some polynomial can be done by computing the action on the coordinate functions $t_{1,2}, \dots, t_{1,m}$ and then substituting the variables $x_{i,j}$ of $\q$ by the respective $t_{i,j}^{\wS}$.

	Therefore, let us consider the polynomials $t_{i,j}^{w_S}$. Recall that we have
	\begin{align*}
		s_{1,2}^{x_{1,2m}} \cdots s_{1,m}^{x_{1,m}} \cdot w_S^{-1} = s_{1,2}^{\tS{1}{2}} \cdots s_{1,m}^{\tS{1}{m}}.
	\end{align*}

	\begin{lem}\label{lem:Tinvariant}
		$\tS{i}{m} = t_{i,m}$ for $\mu < i \leq m$.
	\end{lem}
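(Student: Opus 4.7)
The plan is to compute $t_{i,m}^{w_S}$ directly from the defining identity
\[
s_{1,2}^{x_{1,2}}\cdots s_{1,m}^{x_{1,m}}\cdot w_S^{-1} \;=\; s_{1,2}^{t_{1,2}^{w_S}}\cdots s_{1,m}^{t_{1,m}^{w_S}},
\]
by tracking what happens to the exponent of $s_{i,m}$ (for $\mu<i\leq m$) while the generalized collection process normalises the left-hand side. The goal is to show that no step of the collection process ever alters this exponent, so it remains equal to the original $x_{i,m}$.

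First I would record that every factor appearing in $w_S^{-1}$ has the shape $s_{r,r+1}^{-1}$ with $r\in S\subseteq\{2,\dots,\mu\}$, so its first index satisfies $r\le\mu$. I call a commutator $s_{a,b}^{p}$ that appears during the collection process \emph{low} if $a\leq\mu$. The main step is to prove by induction on the collection process that every commutator that gets introduced is low. For the base step, when a factor $s_{r,r+1}^{-1}$ (with $r\le\mu$) is moved to the left past some $s_{k,\ell}^{p}$, a non-trivial commutator is produced only in the two cases $\ell=r$ and $k=r+1$; applying (\ref{eq:commutators}) yields either $s_{k,r+1}^{?}$ (with first index $k<\ell=r\le\mu$) or $s_{r,\ell}^{?}$ (first index $r\le\mu$), both low. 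For the inductive step, when a low commutator $s_{r',s'}^{p'}$ is subsequently pushed to the right past some $s_{k',\ell'}^{q'}$, the same analysis of (\ref{eq:commutators}) produces new commutators of shape $s_{r',\ell'}^{?}$ (first index $r'\le\mu$) or $s_{k',s'}^{?}$ (first index $k'<\ell'=r'\le\mu$)~-- again low in both cases.

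Once the inductive claim is established, no commutator of the form $s_{i,m}^{?}$ with $i>\mu$ is ever produced, and so the original exponent $x_{i,m}$ of $s_{i,m}$ cannot be modified during the collection; one then reads off $t_{i,m}^{w_S}(a_1^{x_1}\cdots a_n^{x_n})=x_{i,m}=t_{i,m}(a_1^{x_1}\cdots a_n^{x_n})$, as desired. The entire argument is essentially bookkeeping; the only slightly delicate point is the inductive step, where one must check that ``low'' survives when a newly created commutator is itself collected to its Mal'cev position~-- precisely what the case analysis of (\ref{eq:commutators}) delivers.
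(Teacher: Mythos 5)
Your proof is correct. The paper reaches the same conclusion more economically by invoking Lemma~\ref{lem:monomials}: every monomial of $q_{i,m}$ is a chain $\prod_{\nu} X_{\lambda_{\nu-1},\lambda_\nu}$ with $\lambda_0=i$, so it cannot contain any $y_{j,j+1}$ with $j\le\mu<i$; and since those are the only $y$-variables given non-zero values by the action of $w_S$, the polynomial collapses to $x_{i,m}$. Your ``low commutator'' induction re-derives from scratch precisely the index bookkeeping that Lemma~\ref{lem:monomials} already packages, so the two routes rest on the same observation~-- yours is self-contained but longer, while the paper's citation of the structural lemma (which it reuses elsewhere, e.g.\ in Lemmas~\ref{lem:linearmonomials} and~\ref{lem:endvariables2}) makes the argument a two-line affair. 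One small remark: your conclusion that ``no commutator $s_{i,m}^{?}$ with $i>\mu$ is ever produced, hence $x_{i,m}$ is unchanged'' implicitly also uses that when a low commutator $s_{k',\ell'}$ is pushed past $s_{i,m}^{x_{i,m}}$, the exponent of $s_{i,m}$ itself is not touched (only a new low commutator $s_{k',m}$ can appear when $\ell'=i$); this is immediate from (\ref{eq:commutators}) but worth stating explicitly so the reader sees that $x_{i,m}$ is preserved and not merely that no second $s_{i,m}$-block arises.
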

	\begin{proof}
		Let us take a look at the multiplication polynomial $q_{i,m}(x_{1,2}, \dots, x_{1,m}, y_{1,2}, \dots, y_{1,m})$. Every monomial in $\tS{i}{m}$ is obtained from a monomial in $q_{i,m}$ by substituting the variables $y_{j,k}$ by integers. By the choice of $ \wS$, only variables $y_{j,j+1}$ with $ j \leq \mu $ are substituted by non-zero values. Thus, a newly introduced monomial must have been obtained from a monomial containing a variable $y_{j,j+1}$ for some $ j \leq \mu$. 			
		By Lemma~\ref{lem:monomials} that means new monomials can only be introduced to $q_{i,m}$ for $ i \leq \mu$.
	\end{proof}

	Finally, we need the following fact, which is true because no $s_{i,i+1}$ is a commutator.
	\begin{align}
		\tS{i}{i+1} = \begin{cases}
			x_{i,i+1} - 1 & \text{if } i \in S\\
			x_{i,i+1} & \text{otherwise }\\
		\end{cases} \text{ for }1 \leq i \leq m-1.\label{eq:WeightOne}
	\end{align}

 	Now, we are ready to prove the exponential lower bound on the dimension of Nickel's embedding.
	\begin{thm}\label{thm:complexity} 
	Assume the group $G=\UTZ{m}$ is given with with respect to the "standard" Mal'cev basis described above.
	Then for the embedding  computed by Nickel's algorithm of $G$ into  $\UTZ{N}$, we have $N \geq 2^{\floor{\frac{m}{2}}-1}$.
	\end{thm}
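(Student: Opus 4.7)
The strategy is to exhibit $2^{\mu-1}$ linearly independent polynomials in the $G$-module $M$ that Nickel's algorithm constructs, where $\mu = \lfloor m/2 \rfloor$. Since $\bar q = t_{1,m}^{s_{1,2}} - t_{1,m} - t_{2,m}$ lies in $M$, every translate $\bar q^{\,w_S}$ also lies in $M$. I therefore take as the candidate family
\[ \bigl\{\,\bar q^{\,w_S} \;\big|\; S \subseteq \{2,\dots,\mu\}\,\bigr\}, \]
which has exactly $2^{\mu-1}$ members. Linear independence of this family immediately forces $N \geq 2^{\mu-1} = 2^{\lfloor m/2 \rfloor - 1}$.

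To detect linear independence I would attach to each $S \subseteq \{2,\dots,\mu\}$ the test monomial
\[ M_S \;=\; \prod_{i \in \{2,\dots,m-1\}\setminus S} x_{i,i+1}, \]
and examine the matrix $A_{S,S'}$ whose entries are the coefficient of $M_S$ in $\bar q^{\,w_{S'}}$. Writing $\bar q = \prod_{i=2}^{m-1} x_{i,i+1} + P_{1,m}$, the monomial part contributes cleanly: equation~(\ref{eq:WeightOne}) combined with $S' \subseteq \{2,\dots,\mu\}$ gives $t_{i,i+1}^{w_{S'}} = x_{i,i+1} - [i \in S']$ for $i \leq \mu$ and $t_{i,i+1}^{w_{S'}} = x_{i,i+1}$ for $\mu < i \leq m-1$, so that
\[ \Bigl(\prod_{i=2}^{m-1} x_{i,i+1}\Bigr)^{w_{S'}} = \prod_{i=2}^{\mu}\bigl(x_{i,i+1} - [i \in S']\bigr)\cdot\prod_{i=\mu+1}^{m-1} x_{i,i+1} = \sum_{U \subseteq S'}(-1)^{|U|}\, M_U. \]
Thus the contribution of this part to $A_{S,S'}$ is $(-1)^{|S|}$ when $S \subseteq S'$ and $0$ otherwise. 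Ordering the index set by any linear extension of inclusion, this pattern is triangular with non-zero diagonal $(-1)^{|S|}$, so provided the $P_{1,m}$ part contributes nothing to these particular coefficients, $A$ is invertible and the $\bar q^{\,w_{S'}}$ are linearly independent.

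The main obstacle is therefore the ``surviving variable'' step: showing that $M_S$ does \emph{not} arise as a monomial of $P_{1,m}^{w_{S'}}$. Here I would argue as follows. By Lemma~\ref{lem:endvariables2} every monomial of $\bar q$, and hence of $P_{1,m}$, contains some factor $x_{i,m}$ with $i > \mu$; the second bullet of the same lemma forbids $x_{m-1,m}$ from $P_{1,m}$, so in fact every monomial of $P_{1,m}$ contains some $x_{i,m}$ with $\mu < i \leq m-2$. Lemma~\ref{lem:Tinvariant} then gives $t_{i,m}^{w_{S'}} = x_{i,m}$ for every such $i$, so under the substitution this marker factor persists, and every monomial of $P_{1,m}^{w_{S'}}$ must still contain some $x_{i,m}$ with $\mu < i \leq m-2$. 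However, $M_S$ is a product of variables $x_{j,j+1}$ only, and the only $x_{i,m}$ it could ever contain is $x_{m-1,m}$; it therefore has no factor $x_{i,m}$ with $\mu < i \leq m-2$, and so cannot appear in $P_{1,m}^{w_{S'}}$. Combining this with the triangularity of the monomial-part contribution shows $A$ is invertible, yielding the $2^{\mu-1}$ linearly independent polynomials in $M$ and the bound $N \geq 2^{\lfloor m/2 \rfloor - 1}$.
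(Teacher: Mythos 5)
Your proposal is correct and follows the paper's proof essentially step for step: the same family $\{\,\bar q^{\,w_S}\,\}$, the same use of (\ref{eq:WeightOne}) to expand the leading monomial into $\sum_{T\subseteq S}(-1)^{|T|}\prod_{i\in M\setminus T}x_{i,i+1}$, and the same invocation of Lemmas~\ref{lem:endvariables2} and \ref{lem:Tinvariant} to show that the ``marker'' variables $x_{i,m}$ with $\mu < i \leq m-2$ persist under $w_{S'}$, so that $P_{1,m}^{\,w_{S'}}$ never contributes to the coefficient of any $M_T$. Your explicit coefficient matrix $A_{S,S'}$, triangular with respect to an inclusion-compatible ordering, is a slightly cleaner rendering of the linear-independence step that the paper phrases (somewhat tersely) in terms of a ``unique monomial of highest degree''.
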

	\begin{proof}
		Computing $\q^{\wS}$ means we substitute all variables $x_{i,j}$ in $\q$ by the respective polynomials $\tS{i}{j}$. Let $M=\oneset{2,\dots,m-1}$.
		By (\ref{eq:WeightOne}), the monomial $\prod_{i=2}^{m-1}\xI$ becomes 
		$$p_S = \prod_{i \in  S} (\xI - 1)\:\cdot\!\! \prod_{i\in M\setminus S} \xI =\sum_{T \subseteq S} (-1)^{\vert T\vert} \prod_{i\in M\setminus T} \xI.$$
		As every of the $p_S$ has a unique monomial of highest degree, the set $\Set{p_S}{S \subseteq \oneset{2, \dots, \mu}}$ is linearly independent.
		
		We want to show that all these polynomials $p_S$ also appear as summands of $\q^{\wS}$. We can guarantee that by showing that no monomial of the form $\prod_{i\in M\setminus T}\xI$ appears as substitution of any of the other monomials of $\q$~-- then no cancellation can occur.
		By Lemma \ref{lem:endvariables2}, every monomial of $\q - \prod_{i=2}^{m-1}\xI$ contains a variable $x_{i,m}$ for some $\mu < i < m$. Moreover, by Lemma~\ref{lem:Tinvariant} every variable $x_{i,m}$ for  $\mu < i < m$ is substituted by itself. Thus, no monomial appearing in a substitution of $\q - \prod_{i=2}^{m-1}\xI$ is equal to a monomial of the form $\prod_{i\in M\setminus T}\xI$.

		Thus, none of the monomials of the form $\prod_{i\in M\setminus T}\xI$ for any $T \subseteq \oneset{2, \dots, \mu}$ gets cancelled in the substitution of $\q$. Therefore, also every polynomial $\q^{\wS}$ has its unique monomial of highest degree in the variables $x_{2,3}, \dots, x_{m-1,m}$, and hence, also the set $\Set{\q^{\wS}}{S \subseteq \oneset{2, \dots, \mu}}$ is linearly independent. Thus, we have completed the proof of Theorem \ref{thm:complexity}.
	\end{proof}

	\subsection{Reordering the Mal'cev basis}
	
	If we reorder the Mal'cev basis such that $s_{i,j}$ comes on the left of $s_{k,\ell}$ if and only if $j < \ell$ or $j=\ell$ and $i > k$, that is according to the scheme
	$$
	\begin{pmatrix}
	1  &a_1& a_3   & a_6 & \cdots & a_{n} \\
	& 1 & a_2   & a_{5}& & a_{n-1}\\
	&   &   1   & a_4 & &\\
	&   &       & 1      &  &\smash{\vdots}  \\
	& 0 &       &    &  \ddots & \\ 
	&   &       &      &  & 1
	\end{pmatrix},
	$$ 
	then Nickel's algorithm produces an embedding of dimension $n + 1$. 
	In order to see this, proceed as follows: Consider the multiplication polynomials $q_i^{(j)}$ defined by
	$$a_1^{x_1} \cdots a_n^{x_n}·a_j^{-1}=a_1^{q_1^{(j)}} \cdots a_n^{q_n^{(j)}}.$$
	Now, for $a_j = s_{k,\ell}$ we know that $a_j$ commutes with all elements which are on the right of $a_j$ in the Mal'cev basis~-- except elements of the $\ell$-th row. To compute the multiplication polynomials $q_i^{(j)}$, we can move $a_j^{-1}$ step by step to the left introducing the respective commutators. As a commutator of $a_j = s_{k,\ell}$ and some $s_{\ell,\lambda}^{x_{\ell,\lambda}}$, the element $s_{k,\lambda}^{\pm x_{\ell,\lambda}}$ is introduced. Since $s_{k,\lambda}$ belongs to the same column as $s_{\ell,\lambda}$ (and in the Mal'cev basis there are only elements of the same column between the two elements), it can be moved to its correct position in the Mal'cev basis without introducing further commutators. Therefore, only linear and constant monomials occur in the multiplication polynomials. Thus, $(t_n, \dots, t_1, 1)$ is a basis of the $G$-module generated by the coordinate functions $t_1, \dots, t_n$ and all associated matrices are of triangular form.

	\section{Jennings' embedding}\label{sec:Jennings}
	
	For the proof of some general upper bounds for Nickel's embedding, we need some basic facts about Jennings' embedding \cite{Jennings55}.
	Therefore, let us briefly describe how that embedding works.
	Let $G$ be a $\tau$-group with Mal'cev basis $(a_1, \dots, a_n)$ and nilpotency class $c$. 
	We denote $\Gamma_{i + 1} = [ \Gamma_i, G]$ with $\Gamma_1 =G$ and $\tau_i = \Set{g\in G}{\exists\, k\; : \; g^k \in \Gamma_i}$ the \emph{isolator} of $\Gamma_i$. The \emph{weight} $\nu(g)$ of some $g\in G$ is defined as the largest $i$ with $g \in \tau_i$.
	
	The embedding is given by the right action of $G$ on $\Q G/I^{c+1}$ where $\Q G$ is the group ring with rational coefficients and $I = \Set{\sum_{g\in G} \lambda_g g}{\sum_{g\in G} \lambda_g = 0}$ is the augmentation ideal.
	
	We describe a basis of Jennings' embedding according to \cite[Lem.\ 7.3]{Hall57}.
	Set $u_i= 1-a_i \in \Q G$ and let $M\in \N$ be arbitrary. Then the set of all products $v= v_1 \dots v_n$ in which each $v_i$ has one of the following forms,
	\begin{itemize}
		\item $u_{i}^{r_i}\hphantom{u_{i}^Ma_{i}^{-s_i}} \quad \text{with } r_i\in \N$
		\item $u_{i}^Ma_{i}^{-s_i}\hphantom{u_{i}^{r_i}} \quad \text{with }  s_i \in \N \setminus \oneset{0}$, 
	\end{itemize}
	forms a basis of $\Q G$.
	For a basis element $v=v_1v_2\dots v_n$ of $\Q G$, the \emph{weight} is defined as
	\begin{itemize}
		\item $\mathrlap{\nu(u_i)=\nu(a_i)} \hphantom{\nu(v)=\sum_{i}^{n} r_i\nu(u_i)\hphantom{M}}$ (that means $\nu(u_i) = k $ if and only if $a_i \in \tau_k$, but $a_i \not\in \tau_{k+1}$),
		\item $ \displaystyle \nu(v)=\sum_{i}^{n} r_i\nu(u_i)\hphantom{M} $ if all $v_i$ have the form $u_i^{r_i}$,
		\item $\nu (v)= M\hphantom{\sum_{i}^{n} r_i\nu(u_i)}$ if at least one $v_i=u_i^Ma_i^{-s_i}$. 
	\end{itemize}
	Then $\Set{v}{\nu(v) \geq M}$ is a basis of $I^M$.
	That means on the other hand that $\Set{v}{\nu(v) \leq c}$ is a basis of the faithful $G$ module $\Q[G]/I^{c+1}$ where the action of $G$ is the right multiplication and basis elements of weight greater than $c$ are simply ignored.
	The matrices defined by the action have only integral entries and all diagonal entries are one.
	Moreover, if the basis elements are ordered according to their weight, then they are of triangular shape.

	\section{General upper bounds for Nickel's algorithm}

	Now, let $G$ be an arbitrary $\tau$-group with Mal'cev basis $(a_1, \dots, a_n)$.
	Let us again consider the multiplication polynomials $q_1, \dots, q_n \in \Z[x_1, \dots, x_n, y_1, \dots, y_n]$ defined by
	\begin{align}
		a_1^{x_1}\cdots a_n^{x_n} \cdot a_1^{y_1} \cdots a_n^{y_n} = a_1^{q_1} \cdots a_n^{q_n}.\label{multiplicationpolynomials}
	\end{align}
	For a monomial $\omega = x_1^{e_1} \cdots x_n^{e_n} \cdot y_1^{f_1} \cdots y_n^{f_n}$, we define its \emph{weight} $\nu(\omega)$ to be $\sum (e_i + f_i)\nu(a_i)$ (where $\nu(a_i)$ is the weight of $a_i$). The weight of a polynomial $q$ is the maximal weight of its monomials $\nu(\sum_j \omega_j) = \max_j\nu(\omega_j)$~-- thus, if all variable have weight one, the weight agrees with the degree of a polynomial. 
	
    By \cite[7.4]{LeedhamGreenS98}, the multiplication polynomials have degree bounded by the nilpotency class $c$. However, we need a better bound. It is straightforward to see that~-- at least in the case of unitriangular matrices~-- the following bound is tight.
	
\begin{lem}\label{lem:weightbound}
	For all $i$ we have $\nu(q_i) \leq \nu(a_i)$.
\end{lem}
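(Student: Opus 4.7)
My plan is to prove the bound by tracking an invariant through the (generalized) collection process applied to the product
\[
a_1^{x_1}\cdots a_n^{x_n}\cdot a_1^{y_1}\cdots a_n^{y_n}.
\]
Specifically, I would maintain throughout the collection: in every intermediate word $a_{k_1}^{p_1} a_{k_2}^{p_2}\cdots a_{k_r}^{p_r}$ (where the $p_\ell$ are polynomials in the $x_i,y_i$), each exponent satisfies $\nu(p_\ell)\le\nu(a_{k_\ell})$. At termination the word is the normal form $a_1^{q_1}\cdots a_n^{q_n}$, so the invariant is exactly the assertion of the lemma. For the initial word, every exponent is one of the variables $x_i,y_i$, which by definition has weight $\nu(a_i)$, so the invariant holds at the start. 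I would also record the elementary facts $\nu(pq)\le\nu(p)+\nu(q)$ and $\nu(p+q)\le\max\{\nu(p),\nu(q)\}$, which will be used silently.

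The single collection step replaces an adjacent pair $a_j^p a_i^q$ (with $i<j$) by $a_i^q a_j^p\cdot C$, where $C=[a_j^p,a_i^q]$ must then be put into Mal'cev normal form. The factors $a_i^q$ and $a_j^p$ trivially still obey the invariant, so the only thing to check is that every letter $a_k^r$ occurring in the normal form of $C$ satisfies $\nu(r)\le\nu(a_k)$. The key ingredient here is that the Mal'cev basis is compatible with the isolator filtration (so that $\nu(a_k)\ge\nu(a_i)+\nu(a_j)$ for every $a_k$ occurring in $[a_j,a_i]$), which is a consequence of $[\tau_r,\tau_s]\le\tau_{r+s}$ and is already implicit in the definition of the structure constants $c_{i,j,k}$.

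To bound the exponents in $C$, I would expand $[a_j^p,a_i^q]$ via Hall--Petrescu-type commutator identities as an ordered product of basic commutators $[a_{j_1},\dots,a_{j_s}]$ (with $a_{j_\ell}\in\{a_i,a_j\}$) raised to polynomial exponents in $p$ and $q$. A basic commutator using $\alpha$ copies of $a_j$ and $\beta$ copies of $a_i$ lies in $\tau_{\alpha\nu(a_j)+\beta\nu(a_i)}$, so in its Mal'cev normal form only letters $a_k$ with $\nu(a_k)\ge\alpha\nu(a_j)+\beta\nu(a_i)$ can appear; simultaneously its polynomial coefficient is a scalar times a monomial of multiplicative degree $\alpha$ in $p$ and $\beta$ in $q$, whose weight is therefore at most $\alpha\nu(p)+\beta\nu(q)\le\alpha\nu(a_j)+\beta\nu(a_i)\le\nu(a_k)$, using the inductive hypothesis $\nu(p)\le\nu(a_j)$, $\nu(q)\le\nu(a_i)$. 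Writing each basic commutator in Mal'cev normal form can only split it into further letters of no smaller weight, and adding up parallel contributions uses $\nu(p_1+p_2)\le\max\{\nu(p_1),\nu(p_2)\}$, so the invariant is preserved.

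The main obstacle will be pinning down the Hall--Petrescu expansion precisely enough that the degree/weight bookkeeping in the previous paragraph is rigorous: one must ensure that the expansion terminates (which it does because $G$ is nilpotent) and that every basic commutator contributing to $C$ is accounted for. If this turns out to be fiddly, a cleaner fallback is induction on the nilpotency class $c$: the case $c=1$ is immediate ($q_i=x_i+y_i$ has weight $1=\nu(a_i)$), and for $c\ge 2$ one applies the inductive hypothesis to $G/\tau_c$ to handle all $i$ with $\nu(a_i)<c$, and then analyses the remaining central coordinates using the fact that $G$ is a central extension whose cocycle is a biadditive pairing on lower-weight coordinates, which keeps the weight at $c$.
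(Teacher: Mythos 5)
Your strategy — tracking the invariant $\nu(p_\ell)\le\nu(a_{k_\ell})$ through a generalized collection — is genuinely different from the paper's route. The paper never carries polynomial exponents through the collection at all: it runs the collection letter-by-letter on words in $\oneset{a_1^{\pm1},\dots,a_n^{\pm1}}^*$ with no cancellation, bounds the count $\abs{\hat w}_k$ of occurrences of $a_k^{\pm1}$ by a polynomial of weight $\nu(a_k)$ in the initial counts (using $c_{i,j,k}\neq 0\Rightarrow\nu(a_k)\ge\nu(a_i)+\nu(a_j)$, the same fact you invoke), and then converts this numerical growth bound into a weight bound on $q_k$ by evaluating at cleverly separated points $x_i=\kappa_{x_i}z^{\nu(a_i)}$, $y_i=\kappa_{y_i}z^{\nu(a_i)}$ with $\kappa$'s of the form $2^{B^i}$ chosen so that no cancellation among monomials can occur. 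Your approach is more direct and conceptually cleaner in that it explains \emph{why} the weight bound holds, whereas the paper's is more elementary (no Hall--Petrescu) at the cost of the ad-hoc evaluation trick.

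There is, however, a real gap in your main line of argument, and it sits exactly where you suspected. After Hall--Petrescu gives you $[a_j^p,a_i^q]$ as an ordered product of basic commutators $c_{\alpha\beta}$ raised to polynomial exponents $m_{\alpha\beta}(p,q)$, you still have to put the word $\prod_{\alpha,\beta}c_{\alpha\beta}^{\,m_{\alpha\beta}(p,q)}$ into Mal'cev normal form. Each $c_{\alpha\beta}$ individually has a fixed normal form $\prod_k a_k^{e_{\alpha\beta,k}}$ with integer exponents, but $\bigl(\prod_k a_k^{e_k}\bigr)^{m}$ is \emph{not} $\prod_k a_k^{me_k}$ unless those $a_k$ commute, so collecting $C$ launches a sub-collection of exactly the kind you are trying to control. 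As written, verifying the invariant for this sub-collection presupposes the invariant — a circularity. It can be repaired by upgrading the induction to a strong induction \emph{downward on weight}: all letters arising inside $C$ have weight at least $\nu(a_i)+\nu(a_j)>\max(\nu(a_i),\nu(a_j))$, so the sub-collection only touches strictly heavier layers, and one starts the induction at the central (weight-$c$) layer where no further collection is needed. But this restructuring has to be made explicit; it is not automatic from the single top-level induction you set up. Two smaller points: the Hall--Petrescu exponents $m_{\alpha\beta}(p,q)$ are integer-valued polynomials (products of binomial coefficients $\binom{p}{\alpha}\binom{q}{\beta}$-type expressions), not monomials — the degree bound $\deg\le(\alpha,\beta)$ still holds, so the weight estimate survives, but the wording "scalar times a monomial" is inaccurate. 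And your fallback via the central extension leaves the top layer essentially unargued: a polynomial $2$-cocycle on $G/\tau_c$ need not a priori have weight $\le c$; establishing that is tantamount to the original problem, so the "biadditive pairing" remark does not close the case.
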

	
	Lemma~\ref{lem:weightbound} can be derived from Osin's theorem on subgroup distortion \cite[Thm.\ 2.2]{Osin01}. Since a direct proof is not much longer, we give the full prove here. Actually, parts of this proof follow the same ideas as in \cite{Osin01}.

\begin{proof}
	Let us consider the collection process on words $w \in \oneset{a_1^{\pm1}, \dots, a_n^{\pm1}}^*$ over the generators: that means a successive application of the rewriting rules
	\begin{align*}
		&&&&a_j^{\eps_j} a_i^{\eps_i} &\to a_i^{\eps_i} a_j^{\eps_j} \cdot a_{j+1}^{c_{i,j,j+1}^{(\eps_i,\eps_j)}} \cdots a_{n}^{c_{i,j,n}^{(\eps_i,\eps_j)}} & \text{for $i<j$ and $\eps_i, \eps_j \in \oneset{\pm 1}$}
	\end{align*}
	where the numbers $c_{i,j,k}^{(\eps_i,\eps_j)} \in \Z$ are defined by 
	$$[a_j^{\eps_j}, a_i^{\eps_i}] = c_{i,j,n}^{(\eps_i,\eps_j)},$$
	i.\,e.\ the commutators are written with respect of the Mal'cev basis. 
	Here $a_k^{e}$ for $e \in \Z$ stands for the word $a_k \cdots a_k$ with $e$ factors $a_k$ if $e$ is positive and for the word $a_k^{-1} \cdots a_k^{-1}$ with $\abs{e}$ factors $a_k^{-1}$ if $e$ is negative. Note that we do not define any cancellation rules (in particular $a_k^{-1}a_k$ cannot be replaced by the empty word)~-- hence, if a letter appears at some time during the collection process, it will remain there throughout the whole collection process (only the position may change due to the rewriting steps).
	
	We denote the number of occurrences of letters $a_i^{\pm1}$ in $w$ with $\abs{w}_i$.
	Now, consider a word $\hat w$ obtained from $w$ by any number of rewriting steps and count the appearances of letters $a_k^{\pm1}$ for some $k$.
	Any letter  $a_k^{\pm1}$ in $\hat w$ either was there already in $w$ or it was introduced in the collection process. In the latter case, in particular, we had $c_{i,j,k}^{(\eps_i,\eps_j)} \neq 0$ for some $i<j<k$ and  $\eps_i, \eps_j \in \oneset{\pm 1}$. The number of letters  $a_k^{\pm1}$  which were introduced when exchanging $a_j^{\eps_j}$ and $ a_i^{\eps_i}$ for fixed $i<j<k$ and  $\eps_i, \eps_j \in \oneset{\pm 1}$ is bounded by  $\abs{c_{i,j,k}^{(\eps_i,\eps_j)}}$ times the product of the total number of occurrences of $a_j^{\eps_j}$ and $ a_i^{\eps_i}$ in $\hat w$ (this is because letters never disappear).
	Thus, we have
	\begin{align}
		\abs{\hat w}_k \leq \abs{w}_k +  \sum_{c_{i,j,k}^{\!(\eps_i,\eps_j)} \!\neq 0} \abs{c_{i,j,k}^{(\eps_i,\eps_j)}} \abs{\hat w}_i \abs{\hat w}_j\label{eq:wkbound}
	\end{align}
	Next we are going to show the following:
	\begin{align}
		\abs{\hat w}_k &\leq 
		C \cdot  \!\!\!\!\!\!\!\!\! \sum_{\stackrel{(e_1, \dots, e_n) \in \N^n}{\sum_i e_i\nu(a_{i})\leq \nu(a_k)}}  \prod_i\abs{w}_{i}^{e_i}\label{eq:lengthbound}
	\end{align}
	for some constant $C$ (depending on the $c_{i,j,k}^{(\eps_i,\eps_j)}$ and $n$ and $c$). Note that (\ref{eq:lengthbound}) is a slight variation of the well-known fact \cite[Thm.\ 2.3]{MacDonaldMNV15}. In order to prove (\ref{eq:lengthbound}), we procede by induction.
	For $\nu(a_k)=1$, (\ref{eq:lengthbound}) holds trivially. Now, let $\nu(a_k)>1$. Note that $c_{i,j,k}^{(\eps_i,\eps_j)} \neq 0 $ implies that $\nu(a_k) \geq \nu(a_i) + \nu(a_j)$ (this is a general fact~-- see e.\,g.\ \cite[Thm.\ 5.3 (4)]{MKS66}). Therefore, we have by (\ref{eq:wkbound})
	\begin{align*}
		\abs{\hat w}_k &\leq \abs{w}_k +  \sum_{c_{i,j,k}^{(\eps_i,\eps_j)} \neq 0} \abs{c_{i,j,k}^{(\eps_i,\eps_j)}} \abs{\hat w}_i \abs{\hat w}_j\\
		& \leq \abs{w}_k +   \!\!\!\!\sum_{\stackrel{(i,j)}{\nu(a_i) + \nu(a_j) \leq \nu(a_k)}}  \!\!\!\!\abs{c_{i,j,k}^{(\eps_i,\eps_j)}} \left( C \cdot  \!\!\!\!\!\!\!\!\! \sum_{\stackrel{(e_1, \dots, e_n) \in \N^n}{\sum_\ell e_\ell\nu(a_{\ell})\leq \nu(a_i)}}  \prod_\ell\abs{w}_{\ell}^{e_\ell} \right) 
		\left( C \cdot  \!\!\!\!\!\!\!\!\! \sum_{\stackrel{(e_1, \dots, e_n) \in \N^n}{\sum_\ell e_\ell\nu(a_{\ell})\leq \nu(a_j)}}  \prod_\ell\abs{w}_{\ell}^{e_\ell}\right)\\
		&\leq C' \cdot  \!\!\!\!\!\!\!\!\! \sum_{\stackrel{(e_1, \dots, e_n) \in \N^n}{\sum_i e_i\nu(a_{i})\leq \nu(a_k)}}  \prod_i\abs{w}_{i}^{e_i}
	\end{align*}
	for some properly chosen $C'$ (recall that the sums range over a constant number of indices). Thus, we have shown (\ref{eq:lengthbound}).
	
	Then starting the collection process with the word $w = a_1^{x_1}\cdots a_n^{x_n} \cdot a_1^{y_1} \cdots a_n^{y_n}$, we obtain 
	\begin{align}
		\abs{q_k(x_1, \dots, x_n, y_1, \dots, y_n)} &\leq 
		C \cdot  \!\!\!\!\!\!\!\!\! \sum_{\stackrel{(e_1, \dots, e_n) \in \N^n}{\sum_i e_i\nu(a_{i})\leq \nu(a_k)}}  \prod_i(\abs{x_i} + \abs{y_i})^{e_i}\qquad \text{for all } x_i,y_i \in \Z.\label{eq:qbound}
	\end{align}
	Now, we apply a substitution $\phi$ defined by $\phi(x_i) = \kappa_{x_i} z^{\nu(a_i)}$ and $\phi(y_i) = \kappa_{y_i} z^{\nu(a_i)}$ for all $i$ where $z$ is a new variabe and $ \kappa_{x_i}$, $ \kappa_{y_i} \in \N$ are constants (defined below). We apply $\phi$ on both sides of (\ref{eq:qbound})~-- clearly this preserves the inequality.
	Every monomial $\omega =  x_1^{e_1} \cdots x_n^{e_n} \cdot y_1^{f_1} \cdots y_n^{f_n}$ then becomes $\phi(\omega) =  \kappa_{x_1}^{e_1} \cdots  \kappa_{x_n}^{e_n} \cdot  \kappa_{y_1}^{f_1} \cdots  \kappa_{y_n}^{f_n}\cdot z^{\nu(\omega)}$.
	
	It remains to show that if a monomial $\omega$ of $q_k$ with $\nu(\omega) = \nu$ has non-zero coefficient, then the monomial $z^\nu$ has non-zero coefficient in $\phi(q_k)$.
	Once we have established that, it follows that every monomial $\omega$ of $q_k$ satisfies $\nu(\omega) \leq \nu(a_k)$, because the substituted polynomial on the right side of (\ref{eq:qbound}) is of degree $\nu(a_k)$ and, thus, by (\ref{eq:qbound}) also $\phi(q_k)$ is of degree at most  $\nu(a_k)$. This gives the desired bound on the weights of the multiplication polynomials $q_1, \dots, q_n $.
	
	Now, let $M$ be a bound on the absolute values of all coefficients of $q_k$ and $N$ the number of monomials of $q_k$.
	Let $B\in \N$ such that $B > c$ (the nilpotency class) and $2^B > MN$. We set $\kappa_{x_i} = 2^{B^{i}}$ and $\kappa_{y_i} = 2^{B^{n + i}}$. 
	Recall that by \cite[7.4]{LeedhamGreenS98}, we have $e_i, f_i \leq c$ for every monomial $\omega =  x_1^{e_1} \cdots x_n^{e_n} \cdot y_1^{f_1} \cdots y_n^{f_n}$ of $q_k$.

    For $(e_1, \dots, e_n, f_1, \dots, f_n) \neq (e'_1, \dots, e'_n, f'_1, \dots, f'_n)$ with $0\leq e_i, f_i, e'_i, f'_i \leq c$,
     we have
    $$\abs{\sum_i\left( e_iB^{i} + f_i B ^{n+i}\right) - \sum_i\left( e'_iB^{i} + f'_i B ^{n+i}\right)} \geq B.$$
    Therefore, under the same assumption, the products 
    $\kappa_{x_1}^{e_1} \cdots  \kappa_{x_n}^{e_n} \cdot  \kappa_{y_1}^{f_1} \cdots  \kappa_{y_n}^{f_n} = 2^{\sum_i\left( e_iB^{i} + f_i B ^{n+i}\right)}$ and $\kappa_{x_1}^{e'_1} \cdots  \kappa_{x_n}^{e'_n} \cdot  \kappa_{y_1}^{f'_1} \cdots  \kappa_{y_n}^{f'_n} = 2^{\sum_i\left( e'_iB^{i} + f'_i B ^{n+i}\right)}$ differ by at least a factor $2^B$. 
    
    Now, assume that some monomial gets cancelled by the substitution; that means, in particular,
    $$ \sum_{j=1}^{N'} \alpha_j \kappa_{x_1}^{e_{j,1}} \cdots  \kappa_{x_n}^{e_{j,n}} \cdot  \kappa_{y_1}^{f_{j,1}} \cdots  \kappa_{y_n}^{f_{j,n}} = 0$$
    for some $1\leq N' \leq N$ and non-zero coefficients $\alpha_j\in \Z$ with $\abs{\alpha_j} \leq M$ and pairwise distinct exponent vectors $(e_{j,1}, \dots  ,e_{j,n},f_{j,1}, \dots  ,f_{j,n})$ for $j=1,\dots, N$. W.\,l.\,o.\,g.\ let $\kappa_{x_1}^{e_{1,1}} \cdots  \kappa_{x_n}^{e_{1,n}} \cdot  \kappa_{y_1}^{f_{1,1}} \cdots  \kappa_{y_n}^{f_{1,n}}$ be the maximal $\kappa_{x_1}^{e_{j,1}} \cdots  \kappa_{x_n}^{e_{j,n}} \cdot  \kappa_{y_1}^{f_{j,1}} \cdots  \kappa_{y_n}^{f_{j,n}}$. Then we have
    \begin{align*}
    \abs{ \alpha_1 \kappa_{x_1}^{e_{1,1}\cdot  \kappa_{y_1}^{f_{1,1}} \cdots  \kappa_{y_n}^{f_{1,n}}} \cdots  \kappa_{x_n}^{e_{1,n}}} &= \abs{\sum_{j=2}^N \alpha_j \kappa_{x_1}^{e_{j,1}} \cdots  \kappa_{x_n}^{e_{j,n}} \cdot  \kappa_{y_1}^{f_{j,1}} \cdots  \kappa_{y_n}^{f_{j,n}}}\\
    &\leq N M \max_{j= 2, \dots, N} \abs{\kappa_{x_1}^{e_{j,1}} \cdots  \kappa_{x_n}^{e_{j,n}} \cdot  \kappa_{y_1}^{f_{j,1}} \cdots  \kappa_{y_n}^{f_{j,n}}}\\
    &\leq \frac{NM}{2^B}\kappa_{x_1}^{e_{1,1}} \cdots  \kappa_{x_n}^{e_{1,n}}\cdot  \kappa_{y_1}^{f_{1,1}} \cdots  \kappa_{y_n}^{f_{1,n}}\\
    &< \kappa_{x_1}^{e_{1,1}} \cdots  \kappa_{x_n}^{e_{1,n}}\cdot  \kappa_{y_1}^{f_{1,1}} \cdots  \kappa_{y_n}^{f_{1,n}}.
    \end{align*}
    This is a contradiction since $\abs{\alpha_1} \geq 1$.
    Therefore, the monomials of weight $\nu$ of $q_k$ do not all cancel to zero after applying $\phi$. Hence, the monomial $z^\nu$ has non-zero coefficient in $\phi(q_k)$.
\end{proof}

\begin{thm}\label{thm:upperbound}
	Let $k$ denote the rank of $G/[G,G]$, and $c$ the nilpotency class of $G$. Then the matrix representation produced by Nickel's algorithm has dimension at most $\sum_{i=0}^{c-1}k^i + \rank(\Gamma_c(G)) < 2k^c$. Moreover, it has never larger dimension than Jennings' embedding \cite{Jennings55}.
\end{thm}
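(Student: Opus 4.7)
The strategy is to realise Nickel's module $M$ as a sub-$G$-module of $(\Q G/I^{c+1})^*$, and then stratify $M$ by the dual of the augmentation filtration. The Jennings comparison falls out of the mere inclusion, and a finer analysis of each graded piece yields the explicit bound. The crux is to show that every coordinate functional $t_i$ vanishes on $I^{\nu(a_i)+1}$. Introduce the difference operator $\Delta_g f(h) := f(hg) - f(h) = f(h(g-1))$; iterating gives $(\Delta_{g_w}\cdots\Delta_{g_1} f)(1) = f((g_w-1)\cdots(g_1-1))$, so $f$ vanishes on $I^{w+1}$ iff every such $(w+1)$-fold composition vanishes on $f$. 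Writing $h = a_1^{x_1}\cdots a_n^{x_n}$ and $g = a_1^{z_1}\cdots a_n^{z_n}$ we obtain $\Delta_g t_i = q_i(x_1,\dots,x_n,z_1,\dots,z_n) - x_i$; since $q_i$ evaluates to $x_i$ when all $z_j = 0$, every monomial of $q_i - x_i$ contains at least one $z_j$ (contributing weight $\geq 1$), and combined with $\nu(q_i) \leq \nu(a_i)$ from Lemma~\ref{lem:weightbound}, the polynomial $\Delta_g t_i \in \Q[x_1,\dots,x_n]$ (for any fixed integer tuple $z_1, \dots, z_n$) has weight at most $\nu(a_i)-1$. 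The identical monomial-by-monomial bookkeeping shows $\nu(\Delta_g f) \leq \nu(f)-1$ for every $f \in \Q[x_1,\dots,x_n]$ and every $g\neq 1$, so after $\nu(a_i)+1$ applications the result is $0$.

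The inclusion $M \subseteq (\Q G/I^{c+1})^*$ immediately gives $\dim M \leq \dim\Q G/I^{c+1}$, which is the Jennings comparison. For the sharper bound put $F^w := \{f\in M : f|_{I^{w+1}}=0\}$; the short exact sequence $0\to(\Q G/I^w)^*\to(\Q G/I^{w+1})^*\to(I^w/I^{w+1})^*\to 0$ yields injections $F^w/F^{w-1}\hookrightarrow(I^w/I^{w+1})^*$. For $w<c$, since $I/I^2$ is spanned by the images of $\{u_j : \nu(a_j)=1\}$ and $I^w/I^{w+1}$ is a quotient of $(I/I^2)^{\otimes w}$, one has $\dim F^w/F^{w-1}\leq k^w$. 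For $w=c$: for any $f\in F^c$, $g\in G$ and $x\in I^c$, $(f^g-f)(x) = f(x(g^{-1}-1)) = 0$ because $x(g^{-1}-1)\in I^{c+1}$, so $G$ acts trivially on $F^c/F^{c-1}$. That quotient is therefore $\Q$-spanned by the images of the $G$-generators $t_i$ of $M$, and only those with $\nu(a_i)=c$ contribute (the rest already lie in $F^{c-1}$), giving $\dim F^c/F^{c-1} \leq \#\{i:\nu(a_i)=c\} = \rank\Gamma_c$. Summing over $w$, $\dim M \leq \sum_{w=0}^{c-1}k^w + \rank\Gamma_c$; the strict bound $<2k^c$ follows for $k\geq 2$ from $\sum_{w=0}^{c-1}k^w = (k^c-1)/(k-1) \leq k^c-1$ together with the crude $\rank\Gamma_c \leq k^c$.

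The main obstacle is the first step: translating the purely multiplicative weight bound of Lemma~\ref{lem:weightbound} into the annihilation $t_i\in(\Q G/I^{\nu(a_i)+1})^*$, which is precisely what couples Nickel's construction to the augmentation filtration. Once this bridge is built, the filtration argument itself is routine linear algebra; the one additional essential ingredient is the triviality of the $G$-action on the top quotient $F^c/F^{c-1}$, which sharpens the weight-$c$ contribution from $\dim I^c/I^{c+1}$ (which can be much larger) down to $\rank\Gamma_c$ and is what makes Nickel's dimension typically strictly smaller than Jennings'.
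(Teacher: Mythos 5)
Your proof is correct, but it follows a genuinely different route from the paper's. The paper argues purely combinatorially with a spanning set: since $M$ lies in the span of the monomials obtained from the $q_i$ after substituting integers for $y_1,\dots,y_n$, and since Lemma~\ref{lem:weightbound} forces every \emph{non-linear} such monomial to have weight at most $c-1$, the count $\sum_{i=0}^{c-1}k^i + \rank\Gamma_c$ follows from the monomial-counting result \cite[Prop.~3.3]{LoO99}, and the Jennings comparison is just the observation that all these monomials have weight at most $c$. You instead work on the dual side, filtering $M$ by the annihilators $F^w$ of $I^{w+1}$ and bounding the graded pieces. Both proofs rest on the same essential input~-- Lemma~\ref{lem:weightbound}~-- but your argument needs an additional translation step, the finite-difference identity $\nu(\Delta_g f)\leq\nu(f)-1$, to convert the multiplicative weight bound into the annihilation statement $t_i\in(\Q G/I^{\nu(a_i)+1})^*$; the paper sidesteps this by never leaving the spanning-monomial picture. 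Your triviality of the $G$-action on $F^c/F^{c-1}$ is the exact dual of the paper's observation that only linear monomials may have weight $c$, and it is what sharpens $\dim F^c/F^{c-1}$ from $k^c$ down to $\rank\Gamma_c$. The trade-off: your filtration makes the inclusion $M\subseteq(\Q G/I^{c+1})^*$ and hence the comparison with Jennings completely transparent, and it avoids invoking \cite[Prop.~3.3]{LoO99} (replacing it by the elementary fact that $I^w/I^{w+1}$ is a quotient of $(I/I^2)^{\otimes w}$); the paper's monomial argument is more elementary and, as a bonus, directly bounds the number of distinct monomials that can arise during Nickel's algorithm, which the paper exploits right after the theorem to bound the running time for fixed nilpotency class~-- your approach does not give that for free. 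One small caveat shared by both proofs: the identification $\#\{i:\nu(a_i)=c\}=\rank\Gamma_c$ is used without comment, and the strict inequality $<2k^c$ really requires $k\geq 2$ (which you flag, and the paper does not).
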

For fixed nilpotency class Theorem~\ref{thm:upperbound} provides a polynomial bound on the dimension on the embedding. Since not only the dimension, but also the number of occurring monomials is bounded polynomially, we obtain also a polynomial running time of Nickel's algorithm for bounded nilpotency class (the \emph{Insert} routine in  Algorithm~\ref{alg:Nickel} can be called only at most as many times as there are different monomials~-- each call to \emph{Insert} needs time polynomial in the number of bits required to represent the polynomials already in the basis and the newly inserted polynomial).
\begin{proof}
	By definition, the $G$-module produced by Nickel's algorithm is the span of the multiplication polynomials (\ref{multiplicationpolynomials}) where the variables $y_1, \dots, y_n$ are substituted by integer values. Like in the proof of Theorem \ref{thm:upperboundUT}, this $G$-module is contained in the span of the monomials of all the multiplication polynomials (again the variables $y_1, \dots, y_n$ are substituted by integer values). As every non-linear monomial of some $q_i$ has at least one variable from $y_1, \dots, y_n$, we know by Lemma~\ref{lem:weightbound} that for any non-linear monomial $\omega = x_1^{e_1} \cdots x_n^{e_n}$ occurring at any time during Nickel's algorithm, we have $\nu (\omega) = \sum e_i\nu(a_i) \leq c - 1$.
	
	Now, compare this to Jennings' embedding as described in Section~\ref{sec:Jennings}: for every $M \in \N$ there is a canonical one-to-one correspondence between basis elements of $Z[G]/I^{M+1}$ and terms $x_1^{e_1} \cdots x_n^{e_n}$ with $\sum e_i\nu(a_i) \leq M$ (again $I$ is the augmentation ideal). Now, in \cite[Prop.\ 3.3]{LoO99} the dimension of $\Q[G]/I^{M+1}$ over $\Q$ was computed to be at most $\sum_{i=0}^Mk^i$. 
	Since in our case all non-linear monomials have weight at most $c-1$, we obtain an upper bound of $\sum_{i=0}^{c-1}k^i$ for the number of monomials of weight at most $c-1$. As we have not counted the linear monomials $x_i$ for $\nu(a_i)=c$, we have to add the rank of $\Gamma_c(G)$ as a free abelian group.
	In order to see that the dimension is at most as large as Jennings' embedding just observe that all (non-linear and linear) monomials have weight at most $c$. This yields also the bound $2k^c$.
\end{proof}

\section{Nickel's and Jennings' embedding in other classes of groups}\label{NickJen}
	
\subsection{Heisenberg groups}
	As before, let $e_{i,j}(\alpha)$ for $i<j$ be the matrix with $ij$-th entry $\alpha$ and the rest of the entries $0$, and let $s_{i,j}(\alpha)=1+e_{i,j}(\alpha)$ and $s_{i,j}=s_{i,j}(1)$. 
    The $(2m+1)$-dimensional \emph{Heisenberg group} is defined as
	$$G= \gen{a_1, \dots , a_{2m+1}} \leq \UTZ{m+2}  $$
	where 
	$$a_i = \begin{cases} 
	s_{1,i+1} &\mbox{for}\:1 \leq i\leq m, \\
	s_{i-m+1,m+2}&\mbox{for}\:m+1 \leq i < 2m+1, \\
	s_{1,m+2} &\mbox{for}\:i=2m+1.
	\end{cases}$$

	Using the facts that 
	$$s_{i,j}^{-1}=\left(s_{i,j}(1)\right)^{-1}= s_{i,j}(-1),$$  $$[s_{i,j}, s_{j,k}]=s_{i,k}, \: \text{and}\: [s_{j,i}, s_{k,j}]=s_{i,k}(-1) \: \text{for} \: i<j<k,$$ 
	we  can give a finite presentation of the $(2m+1)$-dimensional Heisenberg group:
		$$G= \genr{a_1, \dots , a_{2m+1}}{R}  $$
	with
	$$R=\{ [a_i,a_{m+i}]=a_{2m+1}\:\text{for} \:1 \leq i\leq m \:\text{and all other pairs of}\:a_j\: \text{commute}\}.$$
	Moreover, $(a_1,a_2,\dots, a_{2m+1})$ is a Mal'cev Basis for the Heisenberg group $G$.

\begin{thm}\label{thm:Heisenberg group} Let $G$ be the $(2m+1)$-dimensional Heisenberg group. The size of Jennings' embedding of $G$ is $2m^2+3m+2$ and the size of Nickel's embedding it is $2m+2$. In particular, the size of the matrix obtained in Jennings' embedding is larger than that of Nickel's. 
\end{thm}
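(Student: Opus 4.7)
The strategy is to compute the two embedding dimensions separately and then compare. First I would collect the structural facts about $G$: it has nilpotency class $c=2$ (since $[G,G] = \gen{a_{2m+1}}$ is central), the abelianization $G/[G,G]$ is free abelian of rank $2m$, and the Mal'cev weights are $\nu(a_i)=1$ for $1\le i\le 2m$ and $\nu(a_{2m+1})=2$ because $a_{2m+1}$ is the only generator lying in $\Gamma_2$.

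For Jennings' embedding I would apply the basis of $\Q G / I^{c+1} = \Q G / I^3$ described in Section~\ref{sec:Jennings} and count the basis elements $u_1^{r_1}\cdots u_{2m+1}^{r_{2m+1}}$ whose weight $r_1 + \cdots + r_{2m} + 2r_{2m+1}$ is at most $2$. Splitting by total weight gives the identity in weight $0$; the $2m$ elements $u_i$ $(i\le 2m)$ in weight $1$; and in weight $2$ the $2m$ squares $u_i^2$, the $\binom{2m}{2}$ products $u_iu_j$ with $1\le i<j\le 2m$, and the single element $u_{2m+1}$. Summing yields
\[
 1 + 2m + 2m + \binom{2m}{2} + 1 \;=\; 2m^2 + 3m + 2.
\]

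For Nickel's embedding I would first run the collection process on the generic product once. Because $a_{2m+1}$ is central and the only non-commuting pairs are $(a_i,a_{m+i})$ with $[a_i,a_{m+i}] = a_{2m+1}$, the factor $a_1^{y_1}\cdots a_{2m+1}^{y_{2m+1}}$ passes through the first factor cleanly and the multiplication polynomials come out very simple:
\[
 q_i = x_i + y_i \quad (1\le i\le 2m), \qquad q_{2m+1} = x_{2m+1} + y_{2m+1} - \sum_{k=1}^{m} x_{m+k}\, y_k.
\]
The main step is then to identify the $G$-submodule $M$ of $(\Q G)^*$ generated by $t_1,\dots,t_{2m+1}$. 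Substituting $y_i = -\delta_{i,j}$ into each $q_\ell$ yields explicit formulas for $t_\ell^{a_j}$: for $\ell \le 2m$ one gets $t_\ell - \delta_{\ell,j}$, and for $\ell = 2m+1$ one gets $t_{2m+1}+t_{m+j}$, $t_{2m+1}$, or $t_{2m+1}-1$ according to whether $j\le m$, $m<j\le 2m$, or $j=2m+1$. All of these lie in the $\Q$-span $V = \mathrm{span}\oneset{1,t_1,\dots,t_{2m+1}}$, so $V$ is a $G$-submodule containing each $t_i$; conversely $1 = t_1 - t_1^{a_1}\in M$ forces $V\subseteq M$. Hence $\dim M = 2m+2$.

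Comparing $2m^2 + 3m + 2 > 2m + 2$ for every $m\ge 1$ finishes the statement. The only real bookkeeping obstacle is deriving the multiplication polynomials correctly and verifying that the listed action formulas exhaust the images of the generators on $t_1,\dots,t_{2m+1}$; thanks to the low nilpotency class everything afterwards reduces to linear-algebra checks.
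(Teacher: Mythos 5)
Your proposal is correct and follows essentially the same route as the paper: compute the collection/multiplication polynomials (noting only $q_{2m+1}$ is non-linear), deduce that $\{1, t_1, \dots, t_{2m+1}\}$ spans the $G$-module for Nickel's embedding, and count the Jennings basis of $\Q G/I^3$ by weight. The only cosmetic difference is that you write down the full $q_i(\vec{x},\vec{y})$ and then specialize $y_i=-\delta_{i,j}$, whereas the paper directly collects $\vec{a}^{\vec{x}} a_j^{-k}$; the resulting action formulas and dimension counts agree.
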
 
\begin{proof}
	We will first have a look at the size of the matrices of the image of the Heisenberg group under Nickel's embedding. For simplicity we will write $ a_1^{x_1}a_2^{x_2} \cdots a_{2m+1}^{x_{2m+1}}=\vec{a}^{\vec{x}}$. 
	For $ 1\leq j \leq m $, we have
	$$t_i^{a_j^{-k}}(\vec{a}^{\vec{x}})= \begin{cases}{x_{j}-k} &\mbox{for}\: i=j \\ x_i &\mbox{for}\:i\neq j\:\text{and}\: i\neq 2m+1 \\ x_{2m+1}+kx_{m+j} &\mbox{for}\:i=2m+1 \end{cases} $$
	For $ m+1 \leq j \leq 2m+1 $,
	
	$$t_i^{a_j^{-k}}(\vec{a}^{\vec{x}})= \begin{cases}{x_{j}-k} &\mbox{for}\: i=j \\ x_i &\mbox{for}\:i\neq j \end{cases} $$
	
	In order to see this, we only need to have a look at
	$$ \vec{a}^{\vec{x}}a_j^{-k}=a_1^{x_1} \cdot a_2^{x_2}\cdots a_{2n+1}^{x_{2m+1}}a_j^{-k}.$$
	
	For $ 1\leq j \leq m $, since $s_{1,j+1}=a_j$ commutes with all except $s_{j+1,m+2}=a_{m+j}$ and also $[s_{1,j+1}^{-k},s_{j+1,m+2}^{-y_{m+j}}]=s_{1,m+2}^{ky_{m+j}}=a_{2m+1}^{ky_{m+1}}$, and, moreover, $a_{2m+1}$ commutes with everything, we have   
	\begin{align*}
		\vec{a}^{\vec{x}}a_j^{-k}&= s_{1,2}^{x_1}s_{1,3}^{x_2} \cdots s_{1,j+1}^{x_j} \cdots s_{2,m+2}^{x_{m+1}} \cdots s_{1,m+2}^{x_{2m+1}}\cdot s_{1,j+1}^{-k} 
		\\&=s_{1,2}^{y_1}s_{1,3}^{x_2} \cdots s_{1,j+1}^{x_j-k} \cdots s_{2,m+2}^{x_{m+1}} \cdots s_{1,m+2}^{x_{2m+1}+kx_{m+j}}.
	\end{align*}
	Since $s_{j-m+1,m+2}^{x_j}$ for $ m+1\leq j \leq 2m $ commutes with all other elements which are positioned on the rightside in Mal'cev basis, it follows
	\begin{align*}
		\vec{a}^{\vec{x}}a_j^{-k}&= s_{1,2}^{x_1}s_{1,3}^{x_2} \cdots s_{1,m+1}^{x_m} \cdots s_{1,j+1}^{x_j} \cdots s_{1,m+2}^{x_{2m+1}} \cdot s_{j-m+1,m+2}^{-k} \\&=s_{1,2}^{x_1}s_{1,3}^{x_2} \cdots s_{1,m+1}^{x_m} \cdots s_{j-m+1,j+1}^{x_j-k} \cdots s_{1,m+2}^{x_{2m+1}}.\end{align*}
	Finally, for $j=2m+1$, since $a_{2m+1}=s_{1,m+1}$ is the last element in the product, we have 
	$$\vec{a}^{\vec{x}} a_{2m+1}^{-k}= s_{1,2}^{y_1}s_{1,3}^{y_2} \cdots s_{1,m+1}^{y_m}  \cdots s_{1,m+2}^{y_{2m+1}-k}.$$
	
	In conclusion, we have $\{t_1,t_2, \dots, t_{2m+1}, 1\}$ as the $\Q$-basis for the $G$-module. Hence, the size of the matrices  under Nickel's embedding is $2m+2$. 
	
    Now let us compute the size of the matrices obtained under Jennings' embedding. 
	First set $u_i= 1-a_i$ for $1\leq i \leq m $ and $v_j= 1-a_{m+j}$ for $1 \leq j \leq m$ and $w= 1-a_{2m+1}$. Notice that $\nu(u_i)=\nu(v_i)=1$ for all $1\leq i \leq m $ and $\nu(w)=2$.
	Hence, the elements of the basis for $\Q G / I^3$ are of the following forms
	\begin{itemize}
	\item $1$, $w$,
	\item  $u_i, v_i,\: u_i^2, v_i^2$ for $1\leq i\leq m$,
	\item  $u_iu_j, \: v_iv_j$ for $ 1\leq i< j \leq m$ or $u_iv_j$ for $ 1\leq i, j \leq m$.
	\end{itemize}  The number of elements of the forms $u_i, v_i$ and $u_i^2, v_i^2$ is $4m$ and  the number of elements of the forms $u_iu_j$, $v_iv_j$ and $u_iv_j$ is $\binom{2m}{2}$. As a result, the total number of basis elements is $$2+4m+\binom{2m}{2}= 2m^2+3m+2.$$
\end{proof}
	
\subsection{Free nilpotent groups}
	Let $F(k,c)$ denote the free nilpotent group with $k$ generators and nilpotency class $c$.
	For Nickel's embedding we have an obvious lower bound: the Hirsch length. By Witt's formula (see e.\,g.\ \cite[Thm.\ 5.7]{Hall57} or \cite[Thm.\ 5.11]{MKS66}), the Hirsch length is $\frac{1}{c} k^c + \Oh(\frac{1}{c} k^{c-1})$. Theorem~\ref{thm:upperbound} yields the upper bound of $\rank(\Gamma_c(G)) + \sum_{i = 0}^{c-1} k^i $, which again by Witt's formula  is bounded by $\frac{1}{c} k^c + \sum_{i = 0}^{c-1} k^i +  \Oh(k^{c/2}) = \frac{1}{c} k^c +  k^{c-1} +  \Oh(k^{c-2})$. Thus, lower and upper bound lie only by a factor $ 1 + \frac{c}{k}$ apart (plus lower order terms).
	
	On the other hand by \cite[Prop.\ 6.1]{LoO99}, we know that Jennings' embedding of $F(k,c)$ has dimension exactly $\sum_{i = 0}^c k^i$.

	\subsection{Direct and central products}
	
	Let $G$ and $H$ be two arbitrary $\tau$-groups with Mal'cev bases $\vec a' = (a_1, \dots, a_m)$ and $\vec a'' = (a_{m + 1}, \dots, a_n)$ respectively. Then  $\vec a = (a_1, \dots, a_m, a_{m + 1}, \dots, a_n)$ is a Mal'cev basis of $G \times H$. 
	
	Let $t_i\in \Z[x_1, \dots, x_n]$ with $1 \leq i \leq m$ be one of the coordinate functions defined by $\vec a$. Then $t_i^h = t_i$ for every $h \in H$~-- and thus also $t_i^{gh} = t_i^g$ for every $g\in G$ and $h \in H$ because $g$ and $h$ commute. Moreover, $ t_i^g$ obviously does not depend on variables $x_{m + 1}, \dots, x_n$.
	Likewise for $m + 1 \leq i \leq n$, we have $t_i^{gh} = t_i^h$ for every $g \in G$ and $h \in H$ and $t_i^h$ does not depend on variables $x_{1}, \dots, x_m$.
	Thus, if $Q' \subseteq \Z[x_1, \dots, x_m]$ and $Q''\subseteq \Z[x_{m+1}, \dots, x_n]$ are the bases of $G$ and $H$ computed by Nickel's algorithm, then $Q = Q' \cup Q'' \subseteq \Z[x_1, \dots, x_n]$ is the basis for $G\times H$ which is computed by Nickel's algorithm. Moreover, $Q' \cap Q'' = \oneset{1}$ because the sets of variables occurring in $Q'$ and $Q''$ are disjoint (obviously, the constant polynomial is contained in both $Q'$ and $Q''$).
	Thus, we have the following:
	
\begin{prop}
	Let $M$ (resp.\ $N$) be the dimension of Nickel's embedding of $G$ (resp.\ $H$) into $\UTZ{M}$ (resp.\ $\UTZ{N}$). Then the Nickel's embedding of $G\times H$ has dimension $M + N - 1$.
\end{prop}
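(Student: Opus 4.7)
The plan is to exploit the fact that in a direct product the multiplication polynomials decouple: the multiplication polynomials of $G\times H$ associated with generators from $\vec a'$ only involve the $G$-variables $x_1,\ldots,x_m, y_1,\ldots,y_m$ (and are exactly the multiplication polynomials of $G$), and analogously for generators from $\vec a''$. I would first verify this decoupling directly from the defining equation \eqref{multiplicationpolynomials}: since elements of $G$ commute with elements of $H$ in $G\times H$, the product
$a_1^{x_1}\cdots a_n^{x_n}\cdot a_1^{y_1}\cdots a_n^{y_n}$ can be rearranged so that the $G$-part and $H$-part collect separately, and no cross terms appear.

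Next I would run Nickel's algorithm on $G\times H$ with initial polynomials $t_1,\ldots,t_n$ and track what happens. For $1\le i\le m$ and a generator $a_j$ with $m+1\le j\le n$, the restricted multiplication polynomial $q_i^{(j)}$ equals $x_i$ by the decoupling, so $t_i^{a_j}=t_i$; symmetrically, for $m+1\le i\le n$ and $1\le j\le m$, we have $t_i^{a_j}=t_i$. Consequently, the $G$-submodule $\langle t_1,\ldots,t_m\rangle$ of $(\mathbb Q(G\times H))^*$ generated by the first $m$ coordinate functions is fixed by $H$ and agrees (under the identification $x_i\leftrightarrow$ coordinate on $G$) with the $G$-submodule that Nickel's algorithm produces when applied to $G$ alone; it yields the basis $Q'\subseteq\mathbb Z[x_1,\ldots,x_m]$ of cardinality $M$. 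Likewise, the last $n-m$ coordinate functions produce $Q''\subseteq\mathbb Z[x_{m+1},\ldots,x_n]$ of cardinality $N$.

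The basis output by the algorithm on $G\times H$ is therefore $Q=Q'\cup Q''$. The last step is to count: every element of $Q'$ lies in $\mathbb Z[x_1,\ldots,x_m]$ and every element of $Q''$ lies in $\mathbb Z[x_{m+1},\ldots,x_n]$, so a non-constant polynomial cannot appear in both. The only possible common element is the constant polynomial $1$, which does appear in both $Q'$ and $Q''$ (since $1$ arises during Nickel's algorithm whenever a $G$-generator acts non-trivially on some coordinate function, and both subgroups are non-trivial in the interesting case). Hence $|Q|=|Q'|+|Q''|-|Q'\cap Q''|=M+N-1$.

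The only subtle point I foresee is confirming that $Q'\cap Q''=\{1\}$ rather than being larger or empty: the disjointness of variable sets rules out any non-constant overlap, so I need to check that both runs of Nickel's algorithm genuinely insert the constant polynomial $1$ into the basis. This follows because Nickel's algorithm on any non-trivial $\tau$-group produces a polynomial with a non-zero constant term at some step (already the action of any $a_j$ on $t_j$ yields $t_j-1$, from which $1$ is inserted by the \textit{Insert} routine). Thus the count $M+N-1$ is correct, and the proposition follows.
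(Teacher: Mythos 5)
Your proposal is correct and follows essentially the same route as the paper: observe that the coordinate functions $t_1,\dots,t_m$ are fixed by $H$ and land in $\Z[x_1,\dots,x_m]$ (and symmetrically for $t_{m+1},\dots,t_n$), so the basis is $Q'\cup Q''$ with $Q'\cap Q''=\{1\}$. You spell out in more detail than the paper why the constant polynomial does lie in both $Q'$ and $Q''$ (via $t_j^{a_j}=t_j-1$ and the \textit{Insert} routine), but the argument is the same.
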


	Let us consider a slight generalization of the direct product: a very special type of the central product.
	The \emph{central product} $G \times_C H$  of $G$ and $H$ is defined as $G\times H / \oneset{a_m = a_n}$. Of course, this depends on the Mal'cev bases chosen for $G$ and $H$. A Mal'cev basis for the central product is $(a_1, \dots, a_{m-1}, a_{m+1}, \dots, a_n)$.
	
	For $t_i^{gh}$ with $m\neq i\neq n$ the above considerations for the direct product hold also for the central product. It only remains to look at $t_n^{gh}(x_1, \dots, x_n)$. If $g\in G$ is of the form $a_1^{k_1} \cdots a_{m-1}^{k_{m-1}}$, we have $t_n^{g}(x_1, \dots, x_n) 
	= x_n + p_g(x_1, \dots, x_{m-1})$ for some polynomial $p_g$. 
	
	Also, if $h\in H$ is of the form $a_{m+1}^{k_{m+1}} \cdots a_{n-1}^{k_{n-1}}$, we have $t_n^{h}(x_1, \dots, x_n) 
	= x_n + p_h(x_{m+1}, \dots, x_{n-1})$ for some polynomial $p_h$. 
	Moreover, $t_n^{a_n^k}(x_1, \dots, x_n) = x_n +k$ for $k \in \Z$.
	An arbitrary element of $G \times_C H$ can be written as $gha_n^k$ where $g$ is of the form $a_1^{k_1} \cdots a_{m-1}^{k_{m-1}}$ and $h$ of the form $a_{m+1}^{k_{m+1}} \cdots a_{n-1}^{k_{n-1}}$.
	Thus, we have $t_n^{gha_n^k}(x_1, \dots, x_n) = x_n + p_g(x_1, \dots, x_{m-1}) + p_h(x_{m+1}, \dots, x_{n-1}) + k$.
	
	Hence, if $Q'$ and $Q''$ are the bases of $G$ and $H$ computed by Nickel's algorithm, then $Q =( Q' \cup Q'')/\oneset{x_m = x_n}$ is the basis for $G\times H$ which is computed by Nickel's algorithm. Again, $Q' \cap Q'' = \oneset{1}$, what leads to the following:
	
\begin{prop}
	Let $M$ (resp.\ $N$) be the dimension of Nickel's embedding of $G$ (resp.\ $H$) into $\UTZ{M}$ (resp.\ $\UTZ{N}$). Then the Nickel's embedding of the central product $G\times_C H$ has dimension $M + N - 2$.
\end{prop}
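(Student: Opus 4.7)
The plan is to finish the counting argument sketched in the paragraph immediately preceding the proposition, where the basis produced by Nickel's algorithm for $G\times_C H$ is identified with $Q = (Q'\cup Q'')/\{x_m = x_n\}$.

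First, I compute $|Q'\cup Q''|$. By hypothesis $|Q'| = M$ and $|Q''| = N$, and the preceding discussion asserts $Q'\cap Q'' = \oneset{1}$: each non-constant polynomial in $Q'$ involves only variables from $\{x_1,\dots,x_m\}$, while each non-constant polynomial in $Q''$ involves only variables from $\{x_{m+1},\dots,x_n\}$, and these sets are disjoint. Hence $|Q'\cup Q''| = M + N - 1$.

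Second, I apply the substitution $x_m = x_n$. Because Nickel's algorithm initializes its basis with the coordinate functions, both $t_m = x_m \in Q'$ and $t_n = x_n \in Q''$ are present, and under the identification they collapse to the same polynomial, dropping the count to $M + N - 2$.

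Third, and this is the main obstacle, I need to verify that no further pair of polynomials in $Q'\cup Q''$ is identified by the substitution. If $p\in Q'$ and $q\in Q''$ coincided after replacing $x_m$ by $x_n$, then, since the remaining variables of $p$ and $q$ lie in the disjoint index sets $\{1,\dots,m-1\}$ and $\{m+1,\dots,n-1\}$, both $p$ and $q$ would have to be polynomials in the single common variable alone. Thus it suffices to show that the only polynomials in $Q'$ that are univariate in $x_m$ are the constant $1$ and the coordinate function $t_m$, and analogously for $Q''$. This follows by induction on the steps of Nickel's algorithm: the only generator whose action preserves the subring $\Q[x_m]$ is $a_m$, and its action sends $t_m$ to $t_m - 1$, contributing via \textit{Insert} only the constant $1$; for $j \ne m$, the multiplication polynomial $q_m^{(j)}$ genuinely involves variables other than $x_m$ through the relevant commutators, so no new univariate polynomial in $x_m$ can appear. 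The symmetric argument rules out extra univariate polynomials in $Q''$, completing the count.
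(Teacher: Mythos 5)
Your plan is sound and is essentially the one the paper intends: compute $\abs{Q'\cup Q''} = M + N - 1$, note that the identification $x_m = x_n$ merges $t_m$ with $t_n$, and rule out further merges via the (correct) observation that any additional coincidence would force both polynomials to lie in the one-variable subring $\Q[x_m]=\Q[x_n]$. The claim you then need — that $Q'$ contains exactly two polynomials from $\Q[x_m]$, namely $t_m$ and a scalar multiple of $1$, and analogously for $Q''$ — is true. But the inductive argument you give for it does not establish it.

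First, the assertion that for $j\ne m$ the polynomial $q_m^{(j)}$ ``genuinely involves variables other than $x_m$'' is false in general: already in the Heisenberg group $\UTZ{3}$ with $m=3$ one has $q_3^{(2)} = x_3$, so the action of $a_2$ maps $\Q[x_3]$ to itself. (This instance is harmless, since $t_3^{a_2}=t_3$ contributes nothing new, but the stated premise is wrong.) Second — and this is the real gap — your induction only tracks what happens when $a_j$ acts on a polynomial that is \emph{already} univariate in $x_m$. It says nothing about acting with $a_j$ on some $f\in Q'$ that is \emph{not} in $\Q[x_m]$: after the Insert reduction the other variables could a priori cancel and leave a new univariate polynomial. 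To exclude this one must argue about the whole subspace $V'=\mathrm{span}(Q')$, not step by step about the individual polynomials fed to Insert.

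The fact the paper actually uses (stated in the paragraph preceding the proposition, in the form $t_n^g = x_n + p_g(x_1,\dots,x_{m-1})$) closes this gap in one stroke. Because $a_m$ is the last Mal'cev generator and hence central, $x_m$ never enters any commutator, so every $f\in V'$ has the shape $f = \lambda x_m + r(x_1,\dots,x_{m-1})$ with $\lambda\in\Q$. Consequently $V'\cap\Q[x_m] = \Q\cdot x_m + \Q\cdot 1$ is two-dimensional; since $Q'$ is linearly independent and already contains $t_m$ and a scalar multiple of $1$, no third member of $Q'$ can lie in $\Q[x_m]$. This is what your induction was reaching for, and it also gives $\dim V = M+N-2$ directly as a dimension count on the modules $V'$, $V''$ and the module for $G\times_C H$, without tracking the specific basis Nickel's algorithm happens to output.
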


	Now, let us take a look at Jennings' embedding. The following example shows that the dimension of the embedding of $G \times H$ is not even bounded by the product of the dimensions of the embeddings of $G$ and $H$:

\begin{example}
	Let $G = \Z^k = \gen{e_1, \dots, e_k}$ for some $k \in \N$ and let $H =\Z^c \rtimes_\phi \Z $ where the action $\Z = \gen{a}$ on $\Z^c= \gen{f_1, \dots, f_c}$ is defined by $\phi(f_c) = f_c$ and $\phi(f_i )= f_i f_{i+1}$ for $1 \leq i \leq c-1$ (we use multiplicative notation). Then $G$ is nilpotent of class $1$ and $H$ is nilpotent of class $c$. 
	
	The basis of the $G$-module produced by Jennings' embedding is $(1, 1 - e_1, \dots, 1 - e_k)$. For $H$ the basis is more complicated. Set $u_0 = 1-a$ and $u_i = 1 - f_i$ for $1 \leq i \leq c$. 
	Then, the set $\Set{ \prod_{i=0}^{c} u_i^{r_i}}{r_0 + \sum_{i = 1}^{c}i r_i \leq c}$ forms a basis of the the $H$-module produced by Jennings' embedding. Now, every tuple $(r_1, \dots, r_c)$ with $\sum_{i = 1}^{c}i r_i = c$ defines a partition of the number $c$ (i.\,e.\ a way how to write $c$ as a sum of natural numbers). It is well-known that the number of different partitions of some number $c$ is bounded by $2^{\gamma \sqrt{c}}$ for some constant $\gamma$ (see e.\,g.\ \cite{Pribitkin09}). Therefore, the total number of basis elements for the embedding of $H$ is bounded by $c^2 2^{\gamma \sqrt{c}}$.

	Now, let us look at the basis produced for $G \times H$. This basis, in particular, contains all elements of the form $\Set{ \prod_{i=0}^{k} v_i^{r_i}}{\sum_{i = 0}^{k} r_i \leq c}$ where $v_0 = 1-a$ and $v_i = 1- e_i$ for  $1 \leq i \leq k$. These terms can be identified with the set of polynomials in $k+1$ variables and degree at most $c$: thus, there are at least $\binom{k + c}{c}$ many of them. If we assume that $k = c$, this means that the resulting basis has at least approximately $4^k = 4^c$ elements~-- a huge blow-up compared to $k+1$ and $2^{\gamma \sqrt{c}}$.
	
	Note that this construction also works for central products: we simply choose $G= \Z^{k+1}$ and identify the last basis element with the central generator $f_c$ of $H$.
\end{example}

\section{Open Questions}\label{OpenProblems}
	
	We have seen that, in general, the size of the output of Nickel's algorithm does not depend polynomially on the Hirsch length of the input. However, by reordering the Mal'cev basis of unitriangular matrices, we could obtain a polynomial bound on the dimension of the matrix representation. Thus, the following remains open:
\begin{itemize}
	\item What are tight upper and lower bounds on the dimension of Nickel's embedding for $\UTZ{m}$?
	\item Does every $\tau$-group have a Mal'cev basis such that Nickel's algorithm produces a matrix representation of polynomial size?
\end{itemize}
	We conjecture that the answer is 'no'.
    If this conjecture is true, the following more general question remains open:
\begin{itemize}
	\item Does every $\tau$-group allow a matrix representation of polynomial (in the Hirsch length) size? What is the minimal bound in terms of Hirsch length and nilpotency class?
\end{itemize}
    Independently of whether the answer is 'yes' or 'no', more precise lower and upper bounds on minimal matrix representations of $\tau$-groups would be of great interest.
    
    Another open question is the time complexity of Nickel's algorithm. We have seen, that it is not polynomial in the input size. Still the following question is of interest:
\begin{itemize}
    \item Is the running time of Nickel's algorithm polynomial in the dimension of the matrix representation?
\end{itemize}
The answer to this question is not obvious: Although the size of dimension of the embedding might be of polynomial size, still the number of monomials appearing during the computations might be exponential. In this case the running time also is exponential. There is no obvious reason why such a situation should not occur.

If the nilpotency class is fixed, the dimension of the matrix representation is polynomial by Theorem~\ref{thm:upperbound}.
\begin{itemize}
    \item For fixed nilpotency class give a precise (polynomial) bound on the running time.
\end{itemize}

\bibliography{nilpotent}

\end{document}